\DeclareSymbolFontAlphabet{\mathbb}{AMSb}	% these two magic spells take care of the problem
\DeclareSymbolFontAlphabet{\mathbbl}{bbold}	%
\newtheorem{thm}{Theorem}[section]
\newtheorem{lem}[thm]{Lemma}
\newtheorem{prop}[thm]{Proposition}
\newtheorem{cor}[thm]{Corollary}
\theoremstyle{definition}
\newtheorem{df}[thm]{Definition}
\theoremstyle{remark}
\newtheorem{rem}[thm]{Remark}
\newcommand{\id}{\operatorname{id}}
\newcommand{\cO}{\mathcal{O}}
\newcommand{\dS}{\hat{S}}
\newcommand{\de}{\hat{\varepsilon}}
\newcommand{\cK}{\mathcal{K}}
\newcommand{\KK}{KK}
\newcommand{\End}{\operatorname{End}}
\newcommand{\dG}{\QG}
\newcommand{\dlambda}{\hat{\lambda}}
\newcommand{\atimes}{\otimes_{\rm alg}}
\newcommand{\dD}{\hat{\Delta}}
\newcommand{\dtau}{\hat{\tau}}
\newcommand{\dphi}{\hat{\psi}}
\newcommand{\cD}{\mathcal{D}}
\newcommand{\cE}{\mathcal{E}}
\newcommand{\cF}{\mathcal{F}}
\newcommand{\cP}{\mathcal{P}}
\newcommand{\cN}{\mathcal{N}}
\newcommand{\op}{{\rm op}}
\newcommand{\G}{\mathbb{G}}
\newcommand{\HG}{\mathbb{H}}
\newcommand{\Pol} {\cO}
\newcommand{\bn}{\mathbb N}
\newcommand{\Linf}{\ell^{\infty}}
\newcommand{\GGamma}{\mathbbl{\Gamma}}
\newcommand {\Irr} {\textup{Irr}\,}
\newcommand{\wot}{\overline{\ot}}
\newcommand{\Com}{\Delta}
\newcommand{\QG}{\GGamma}
\newcommand{\hQG}{\widehat{\GGamma}}
\newenvironment{rlist}
{
	
	\begin{enumerate}}
	{\end{enumerate}}
\newcommand{\cst}{\ifmmode\mathrm{C}^*\else{$\mathrm{C}^*$}\fi}
\newcommand{\ot}{\otimes}
\newcommand{\ltwo}{\ell^2(\QG)}
\newcommand{\lone}{\ell^1(\QG)}
\numberwithin{equation}{section}
\DeclareMathOperator{\C}{C}
\begin{document}
\title[Quantum Baum-Connes and Rosenberg Conjectures]{On the Baum--Connes conjecture for discrete quantum groups with torsion and the quantum Rosenberg Conjecture}
\author{Yuki Arano}
\address{Graduate School of Science, Kyoto University, Sakyo-ku, Kyoto, 606-8216, Japan}
\email{y.arano@math.kyoto-u.ac.jp}
\author{Adam Skalski}
\address{Institute of Mathematics, Polish Academy of Sciences, ul.\'Sniadeckich 8, 00-656 Warsaw, Poland}
\email{a.skalski@impan.pl}

\keywords{Quantum group, triangulated categories, UCT, Rosenberg conjecture}
\subjclass[2020]{ Primary 46L67, Secondary 46L80}

\begin{abstract}
	We give a decomposition of the equivariant Kasparov category for discrete quantum group with torsions. As an outcome, we show that the crossed product by a discrete quantum group in a certain class preserves the UCT. We then  show that quasidiagonality of a reduced  \cst-algebra of a countable discrete quantum group $\GGamma$ implies that $\GGamma$ is amenable, and deduce from the work of Tikuisis, White and Winter, and the results in the first part of the paper, the converse (i.e.\ the quantum Rosenberg Conjecture) for a large class of countable discrete unimodular quantum groups. We also note that the unimodularity is a necessary condition.
\end{abstract}
\maketitle
	\section{Introduction}
	In \cite{UCT}, Rosenberg and Schochet have introduced a property of C*-algebras called the Universal Coefficient Theorem (UCT in short) for $K$-theory of C*-algebras and  have shown that it holds  for all C*-algebras in the so-called bootstrap class. The UCT gives a formula computing the $KK$-groups only from the $K$-groups. This property plays an important role in the classification of nuclear C*-algebras (see e.g.\ \cite{TWWUCT} and the last section of this paper).

	The UCT for group C*-algebras is related to (a variation of) the Baum--Connes conjecture of groups. In \cite{Tu}, Tu proved that the group C*-algebra of a discrete group with Haagerup property satisfies the UCT using the Higson--Kasparov type argument \cite{EKK} for groupoids.

	The Baum--Connes conjecture for quantum groups first appeared in the series of works of Meyer and Nest \cite{MeyerNest3}, \cite{Meyer} (after an early paper \cite{GoswamiKuku}). Even though there is no unified method proving the Baum--Connes conjecture for fairly general quantum groups, it has been established for many known examples of discrete quantum groups: \cite{FreslonMartos}, \cite{VergniouxVoigt}, \cite{VoigtBC}, \cite{VoigtQAut}.

In this paper, we study the general theory of the Baum--Connes conjecture for discrete quantum group with possible torsion. In particular we give a decomposition of the equivariant category $KK^{\G}$ (where $\G$ is any compact quantum group), which gives the Baum--Connes assembly map. As a byproduct of the general theory, we prove that the group C*-algebra of a discrete quantum group satisfying the Baum--Connes conjecture satisfies also the UCT. This is applied in the last section of the paper to the considerations regarding the quantum version of the Rosenberg Conjecture, connecting amenability of a discrete group to quasidiagonality of its C*-algebra.

The detailed plan of the paper is as follows: in the following section we introduce the notation and some background related to discrete/compact quantum groups and triangulated categories. In Section 3 we present a `crossed product type' construction for two  C*-algebras equipped respectively with left and right action of a given compact quantum group, which is then applied in Section 4 to build an adjoint functor between certain $KK$-categories. In Section 5 we establish as a consequence a relationship between the $\langle {\rm Cof} \rangle$-Baum--Connes property of a discrete quantum group and the Universal Coefficient Theorem for some crossed products. Finally in Section 6 the applications to quantum Rosenberg Conjecture are discussed.
	
	\subsection*{Acknowledgment}
This work was initiated in the workshop ``The 6th Workshop on
Operator Algebras and their Applications" in the School of Mathematics of Institute for Research in Fundamental Sciences (IPM). The authors would like to thank the organizers and IPM for their hospitality.
Y.A.\ is supported by JSPS KAKENHI Grant Number JP18K13424.
A.S.\ was partially supported by the National Science Centre (NCN) grant no.~2014/14/E/ST1/00525. He acknowledges discussions with Pawe\l$\,$ J\'oziak, Piotr So\l tan, Stuart White and Joachim Zacharias on the subject of the last section of the paper.

Finally we thank the referee for a careful reading of our paper and several useful comments.

	\section{Preliminaries}
	\subsection{Quantum groups}
	
	Let $\QG$ be a discrete quantum group (so that $\hQG$ is a compact quantum group in the sense of \cite{wor} -- we refer to that paper for the details of the facts introduced below, and often write simply $\G$ for the dual compact quantum group). We study $\QG$ via its algebra of functions, $c_0(\QG)$. Recall that
	\[ c_0(\QG) = \bigoplus_{\alpha\in\Irr_{\hQG}} M_{n_{\alpha}},\]
	where $\Irr_{\hQG}$ denotes the set of equivalence classes of irreducible unitary representations of $\hQG$, which will be usually assumed to be countable in this paper (in which case we will say that $\hQG$ has countable dual). The span of coefficients of $\Irr_{\hQG}$ is a Hopf *-algebra denoted $\Pol(\hQG)$, admitting a Haar (bi-invariant) state $h$. Note that we will also write $c_c(\QG)$ for the algebraic direct sum:
		\[ c_c(\QG) = \bigoplus^{\textup{alg}}_{\alpha\in\Irr_{\hQG}} M_{n_{\alpha}},\]

	The \cst-algebra $\cst_r(\GGamma)$, often written as $\C(\hQG)$, is the \cst-completion of $\Pol(\hQG)$ in the GNS representation with respect to $h$. For each $\alpha \in \Irr_{\hQG}$ we choose a representative, i.e.\ a unitary matrix $U^{\alpha} = (u^{\alpha}_{i,j})_{i,j=1}^{n_{\alpha}} \in M_{n_{\alpha}}(\cst_r(\GGamma))$. We may and do assume that $c_0(\QG)$ is represented, via the left regular representation of $\QG$,  on the Hilbert space $\ell^2(\QG)$ (viewed here as the GNS space of the Haar state of $\hQG$, so also the space on which $\cst_r(\GGamma)$ acts); this representation will be later denoted by $\dlambda$. The matrix units in $M_{n_{\alpha}}\subset c_0(\QG)$ will be denoted by $e_{i,j}^{\alpha}$.
	
	The multiplicative unitary of $\QG$ is the unitary $W\in B(\ltwo \ot \ltwo)$ given by the formula:
	\[ W = \sum_{\alpha \in \Irr_{\hQG}} \sum_{i,j=1}^{n_{\alpha}}e_{j,i}^{\alpha} \ot (u^{\alpha}_{i,j})^*\]
	%(this formula can be easily deduced from the more familiar expression for the multiplicative unitary $W^{\hQG}$ of the dual quantum group and the relation $W=\sigma(W^{\hQG})^*$).
	The von Neumann completion of $c_0(\QG)$ %naturally isomorphic to $\prod_{\alpha\in\Irr_{\hQG}} M_{n_{\alpha}}$,
	will be denoted by $\Linf(\QG)$. The predual of $\Linf(\QG)$
	% naturally isomorphic to $\bigoplus_1 \textup{Tr}_{n_{\alpha}}$ (where $\textup{Tr}_{n_{\alpha}}$ is the predual of $M_{n_{\alpha}}$),
	will be denoted by $\lone$.
	
	The coproduct of $\QG$, a coassociative  normal unital $^*$-homomorphism $\Com:\Linf(\QG) \to \Linf(\QG) \wot \Linf(\QG)$ is implemented by $W$ via the following formula:
	\begin{equation}
	\Com(x) = W^* (1 \ot x) W, \;\;\; x \in \Linf(\QG).
	\end{equation}
	Given a functional $\phi \in \lone$ we define the (normal, bounded) maps $L_\phi:\Linf(\QG) \to \Linf(\QG)$ and $R_\phi:\Linf(\QG) \to \Linf(\QG)$  via the formulas
	\[ L_\phi= (\phi \ot \id)\circ \Com, \;\;\; R_\phi= (\id \ot \phi)\circ \Com.\]

A discrete quantum group $\QG$ is said to be \emph{finite}, if $\Irr_{\hQG}$ is finite (equivalently, $c_0(\QG)$ is finite-dimensional), and \emph{countable}, if $\Irr_{\hQG}$ is countable (equivalently, $c_0(\QG)$ is separable). Further $\QG$ is \emph{unimodular} if its left and right Haar weights coincide; equivalently the Haar state $h$ of $\hQG$ is tracial.

	%Peter-Weyl to be described.

	%\begin{deft*}
A discrete quantum group $\QG$ is called \emph{amenable} if it admits a bi-invariant mean, i.e.\ a state $m\in \Linf(\QG)^*$, such that for all $\phi \in \lone$ there is
	\[m \circ L_{\phi} = m \circ R_{\phi} = \phi(1) m.\]
	%\end{deft}
	
By \cite{Vaesetal} a discrete quantum group $\QG$ is amenable if it admits a left invariant mean $m\in \Linf(\QG)^*$: a state such that for each $\phi \in \lone$ there is $m \circ L_{\phi} =  \phi(1) m$. In fact it suffices to check the last formula for the functionals of the form $\widehat{e_{i,j}^{\alpha}}$, $\alpha \in \Irr_{\hQG}$, $i,j=1,\ldots,n_{\alpha}$, as the latter are linearly dense in $\lone$, and the map $\phi \mapsto L_\phi$ is a (complete) isometry. Thus we will need the following explicit form of the map $L_{\phi}$ for $\phi=\widehat{e_{i,j}^{\alpha}}$:
\begin{equation} \label{Lphi}
	L_{\phi} (x) =
	\sum_{p=1}^{n_{\alpha}} u^{\alpha}_{i,p}  x (u^{\alpha}_{j,p})^*
\end{equation}
(with $x \in \ell^{\infty}(\QG)$).

Recall that $\G$ denotes the dual compact quantum group of $\QG$.
The left regular representation $\lambda \colon C(\G) \to B(L^2(\G))$ is the GNS representation with respect to the Haar state $\varphi$; note that $L^2(\G)$ is canonically isomorphic to $\ell^2(\GGamma)$. We also have the right regular representation $\rho(x) = J R(x)^* J$, $x \in C(\G)$, where $J$ is the modular conjugation and $R$ is the unitary antipode.

	%Let 
	%\[c_c(\dG) := \bigoplus_{\pi \in {\rm Irr}(G)}^{\rm alg} B(H_\pi).\]
Via the natural pairing
	\[\cO(\G) \times c_c(\dG) \to \mathbb{C},\]
we put a (multiplier) Hopf algebra structure on $c_c(\dG)$.

For details of quantum group actions and the associated crossed products we refer for example to \cite{Kenny} and \cite{Vaes}; note that we always work with reduced/faithful actions. Given a left action $\alpha \colon A \to C(\G) \otimes A$ we call $A$ a \emph{$\G$}-C*-\emph{algebra}. Such an action induces a right $c_c(\dG)$-comodule algebra structure on $A$:
\[a \triangleleft x := (x \otimes \id)\alpha(a)\]
for $a \in A$ and $x \in c_c(\dG)$.	Similarly a right action $\beta \colon B \to B \otimes C(\G)$ induces a left $c_c(\dG)$-comodule algebra structure on $B$:
	\[x \triangleright b := (\id \otimes x)\beta(b)\]
for $b \in B$ and $x \in c_c(\dG)$.

For a finite dimensional C*-algebra $D$ and a left action $\alpha$ of $\G$ on $D$, there always exists a $\G$-invariant faithful state $\varphi_D$ on $D$,  which is of the form $\varphi_D = {\rm Tr}(\rho \cdot)$, where ${\rm Tr}$ is the trace  taking value 1 at each minimal projection and $\rho \in D_+$. Let $(\lambda_D, L^2(D) = L^2(D,\varphi_D),\Omega_D)$ be the GNS representation of $\varphi_D$. Then $L^2(D)$ also carries a $*$-representation $\rho_D$ of the opposite C*-algebra $D^\op$, given by
	\[\rho_D(x^\op) \lambda_D(y) \Omega_D = \lambda_D(y \rho^{1/2} x \rho^{-1/2}) \Omega_D,\;\;\; x, y \in D.\]
	Furthermore the formula
	\[U^*(a \otimes x \Omega) = \alpha(x) (a \otimes \Omega), \;\;\; a \in C(\G), x \in D,\]
	defines a unitary representation $U \in C(\G) \otimes B(L^2(D))$.

	\begin{df}\cite{BS}
	For a C*-algebra $A$ with a $\G$-action $\alpha \colon A \to C(\G) \otimes A$ and a Hilbert $A$-module $\cE$, a \emph{$\G$-action} on $\cE$ is a linear map $\alpha_\cE \colon \cE \to C(\G) \otimes \cE$ such that
	\begin{enumerate}
	\item $\alpha_\cE(x a) = \alpha_\cE(x) \alpha(a)$ for all $x \in \cE, a \in A$,
	\item the linear span $\alpha_\cE(\cE)(C(\G) \otimes 1)$ is dense in $C(\G) \otimes \cE$ and
	\item $(\id \otimes \alpha_\cE)\alpha_\cE = (\Delta \otimes \id)\alpha_\cE$.
	\end{enumerate}
	\end{df}
	This is equivalent to say that $\alpha_\cE$ is the corner of an action of $\G$ on the linking algebra $\cK(\cE \oplus A) \simeq \left(\begin{matrix} \cK(\cE) & \cE \\ \cE^* & A \end{matrix} \right)$ which coincides with $\alpha$ on $A \cong \left(\begin{matrix} 0 & 0 \\ 0 & A \end{matrix} \right)$. See \cite{BS} for details.

Finally for the notion of \emph{torsion} in the context of compact quantum groups we refer for example to \cite{YukiKenny}.

	\subsection{Triangulated category}
	In \cite{MeyerNest3}, Meyer and Nest introduced a framework to work on $KK$-theory in terms of triangulated categories. In this section, we review their work, not going into the full generality of triangulated categories but only restricting ourselves to describe the situation in terms of $\G$-equivariant $KK$-theory, where $\G$ is a fixed compact quantum group with a countable dual.

	To each equivariant $*$-homomorphism $\varphi \colon A \to B$, one can associate an exact sequence called the mapping cone exact sequence:
	\[0 \to C_\varphi \xrightarrow{\iota} M_\varphi \xrightarrow{{\rm ev}_1} B \to 0\]
	where $M_\varphi = \{(f,a) \in (C[0,1] \otimes B) \oplus A\} \mid f(1) = \varphi(a) \} \supset C_\varphi = \{(f,a) \in M_\varphi \mid f(0) = 0\}$.
	Notice that $M_\varphi$ is homotopy equivalent to $A$.
	We may continue this construction for $\iota$ to get another mapping cone exact sequence:
	\[0 \to C_\iota \to M_{\iota} \to M_\varphi \to 0.\]
	Then $C_\iota$ is actually homotopy equivalent to the suspension $SB = C_0(\mathbb{R}) \otimes B$. Hence in the category $KK^{\G}$, we get a diagram
	\[\dots \to SC_\varphi \to SA \to SB \to C_\varphi \to A \to B\]
	or
	\[\xymatrix{A \ar[rr]^\varphi && B \ar[ld] \\ & C_\varphi \ar[lu]|\circ &}\]
	which gives the six-term exact sequence after taking the $K$-groups. (Here the circle on the arrow represents the change of the degree.) A \emph{distinguished triangle} is a diagram which is $KK^{\G}$-equivalent to some mapping cone triangle as above.
%
%	The structure of the triangulated category is a category with distinguished triangles as we stated above.
	\begin{df}
	A \emph{localizing subcategory} of $KK^{\G}$ is a full subcategory which is closed under taking countable direct sums, suspensions and mapping cones.

	Let $\cP,\cN$ be localizing subcategories of $KK^{\G}$. We say that the pair $(\cP,\cN)$ is \emph{complementary} if
	\begin{enumerate}
	\item $KK^{\G}(P,N) = 0$ for any $P \in \cP$ and $N \in \cN$.
	\item For any $A \in KK^{\G}$, there exists a distinguished triangle
	\[\xymatrix{P(A) \ar[rr] && A \ar[ld] \\ & N(A) \ar[lu]|\circ &},\]
	where $P(A) \in \cP$ and $N(A) \in \cN$.
	\end{enumerate}
	\end{df}
	\begin{rem}
	For arbitrary choice of $P(A)$ and $N(A)$ as above, the morphism $P(A) \to A$ is universal among all morphisms $P \to A$ for $P \in \cP$. In fact, we write the six-term exact sequence of $KK^{\G}$:
	\[\dots \to KK^{\G}(P,SN(A)) \to KK^{\G}(P,P(A)) \to KK^{\G}(P,A) \to KK^{\G}(P,N(A)) \to \dots.\]
	Since $KK^{\G}(P,SN(A)) = KK^{\G}(P,N(A)) = 0$ by (1), the map $KK^{\G}(P,P(A)) \to KK^{\G}(P,A)$ is an isomorphism. This is what we claimed.

	This in particular shows that the triangle $P(A) \to A \to N(A)$ is unique up to isomorphism.
	\end{rem}

	The following result holds in a  more general setting, namely, when the adjoint is only partially defined, but we only use it in the following form.
	\begin{thm}\label{thm:MN}\cite[Theorem 3.31]{MeyerNest3}
	Let $\HG, \G$ be compact quantum groups with countable duals and let $F_i$ be a countable family of functors $KK^{\G} \to KK^{\HG}$ which preserve the distinguished triangles. Assume that there exist left adjoint functors $F_i^\perp \colon KK^{\HG}\to KK^{\G}$, i.e.\ $KK^{\G}(F_i^\perp(A),B) \simeq KK^{\HG}(A,F_i(B))$ for all C*-algebras $A, B$ equipped respectively with a $\G$ and $\HG$ action.
	We set $\cP$ to be the smallest thick subcategory containing $F_i^\perp(A)$ and $\cN$ to be the full subcategory whose object is $N \in KK^{\G}$ such that $F_i(N)$ is $KK^{\HG}$-contractible (note that $\cN$ is automatically thick). Then $(\cP,\cN)$ is localizing.
	\end{thm}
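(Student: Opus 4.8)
The plan is to verify that $\cP$ and $\cN$ are localizing subcategories of $KK^{\G}$ which form a complementary pair, in the sense of the definitions above. That $\cP$ is localizing holds by construction, $\cP$ being the localizing subcategory generated by the objects $F_i^\perp(A)$. For $\cN$, note that each $F_i$ commutes with suspensions and sends distinguished triangles to distinguished triangles, so the class of $N$ with $F_i(N)\simeq 0$ for all $i$ is closed under suspensions and mapping cones; it is closed under countable direct sums because the functors $F_i$ at hand commute with countable direct sums in $KK$-theory (this feature, automatic for the functors that occur here, is also what makes the homotopy-colimit step below work). Hence $\cN$ is localizing as well.

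Condition (1) of a complementary pair is then formal. For $N\in\cN$ and any C*-algebra $A$ with an $\HG$-action, the adjunction gives
\[KK^{\G}(F_i^\perp(A),N)\simeq KK^{\HG}(A,F_i(N))=0,\]
since $F_i(N)$ is $KK^{\HG}$-contractible. The full subcategory $\cP':=\{X\in KK^{\G}\mid KK^{\G}(X,N)=0\ \text{for all}\ N\in\cN\}$ is localizing: closure under suspensions follows from $\cN$ being closed under desuspension, closure under mapping cones from the six-term exact sequence in the first variable, and closure under countable direct sums from $KK^{\G}(\bigoplus_n X_n,N)\simeq\prod_n KK^{\G}(X_n,N)$. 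As $\cP'$ contains every $F_i^\perp(A)$, it contains $\cP$, which is exactly (1).

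The substance of the proof is condition (2): given $A\in KK^{\G}$, one must produce a distinguished triangle $P(A)\to A\to N(A)\to SP(A)$ with $P(A)\in\cP$ and $N(A)\in\cN$. I would use the standard cellular-approximation (phantom tower) construction. The counit $\varepsilon_A^i\colon F_i^\perp F_i(A)\to A$ of the $i$-th adjunction satisfies the triangle identity $F_i(\varepsilon_A^i)\circ\eta_{F_i(A)}=\id_{F_i(A)}$, so $F_i(\varepsilon_A^i)$ is a split epimorphism in $KK^{\HG}$. Set $P_0:=\bigoplus_i F_i^\perp F_i(A)\in\cP$ and complete the morphism $(\varepsilon_A^i)_i\colon P_0\to A$ to a distinguished triangle $P_0\xrightarrow{f_0} A\xrightarrow{g_0} A_1\to SP_0$. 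Since $F_i(f_0)$ is (split) epic and $F_i(g_0)F_i(f_0)=0$, we get $F_i(g_0)=0$ for every $i$. Iterating the construction with $A_1$ in place of $A$, and so on, produces objects $A=A_0,A_1,A_2,\dots$, morphisms $g_n\colon A_n\to A_{n+1}$ with $F_i(g_n)=0$ for all $i,n$, and distinguished triangles $P_n\to A_n\to A_{n+1}\to SP_n$ with $P_n\in\cP$. Put $N(A):=\operatorname{hocolim}_n A_n$ and let $P(A)$ be the fiber of the canonical morphism $A\to N(A)$, so that $P(A)\to A\to N(A)\to SP(A)$ is distinguished. As all transition maps become zero after $F_i$, the object $F_i(N(A))\simeq\operatorname{hocolim}_n F_i(A_n)$ is contractible, whence $N(A)\in\cN$; and $P(A)$, being obtained from the $P_n\in\cP$ by suspensions, extensions and a homotopy colimit, lies in the localizing subcategory $\cP$. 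This gives the required triangle.

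The only genuine obstacle is this last construction, and two points there require care. First, one must know that $\operatorname{hocolim}$ interacts well with the functors $F_i$ and with membership in $\cP$; this is where the countability of the dual and of the family $\{F_i\}$ enters, keeping all algebras separable and all direct sums countable, and where one invokes that the relevant functors commute with countable direct sums. Second, one must check that the fiber $P(A)$ of $A\to\operatorname{hocolim}_n A_n$ truly lands in $\cP$, and not merely in the orthogonal complement $\cP'$ of $\cN$; this is obtained by realizing $\operatorname{hocolim}_n A_n$ through a mapping telescope, so that $P(A)$ is the homotopy colimit of the partial mapping cones assembled from $P_0,P_1,\dots,P_n$, each of which lies in $\cP$. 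Everything else is a formal consequence of the adjunctions and the axioms of a triangulated category with countable direct sums.
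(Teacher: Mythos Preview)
The paper does not give its own proof of this theorem; it is quoted from \cite[Theorem 3.31]{MeyerNest3}, and only the phantom tower (``phantom castle'') construction is recalled later in Section~\ref{section UCT}. Your argument is exactly that construction and is correct: iterate cones of the counits $\bigoplus_i F_i^\perp F_i(A_n)\to A_n$, pass to the homotopy colimit to get $N(A)$, and use the octahedral axiom to exhibit the fibre $P(A)$ as a homotopy colimit of iterated extensions of the $P_n$, hence in $\cP$. This matches both Meyer--Nest and the paper's sketch (the paper writes ``ho-lim'' for what is a homotopy colimit of the direct system $N_0\to N_1\to\cdots$, and denotes your $A_n$ by $N_n$ and your partial fibres by $\tilde A_n$).

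The one point worth stressing is the one you already flag: preservation of countable direct sums by the $F_i$. Since $F_i^\perp$ is the \emph{left} adjoint, $F_i$ is a right adjoint and formally only preserves limits, so commutation with $\bigoplus$ and with $\operatorname{hocolim}$ is not automatic from the adjunction alone. In Meyer--Nest and in this paper the concrete functors (here $D^\op\rtimes\G\ltimes\cdot$, built from crossed products and tensoring) do preserve countable direct sums, which is what makes both the closure of $\cN$ under direct sums and the identification $F_i(\operatorname{hocolim}A_n)\simeq\operatorname{hocolim}F_i(A_n)$ go through. Your parenthetical acknowledges this, but it is a genuine hypothesis rather than a side remark.
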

	In this case, an explicit construction of $P(A)$ is given by the phantom castle construction \cite[Section 3]{MeyerNest3}. We recall the construction in Section \ref{section UCT}.

	\section{Crossed products}
	Let $\G$ be a compact quantum group. For two C*-algebras with $\G$-actions, there is no general way of constructing the ``product" action on the tensor product. However it is possible to construct a C*-algebra ressembling a ``crossed product" with respect to the product action. The construction works for arbitrary locally compact quantum group actions in an analogous manner, but we restrict ourselves to work with the compact case.

Let $A$ (resp.\ $B$) be a C*-algebra with a right (resp.\ left) $\G$-action:
	\[\alpha \colon A \to A \otimes C(\G), \beta \colon B \to C(\G) \otimes B;\]
these will be fixed throughout this section. By	a \emph{covariant representation} of $(A,B,\G)$ on a Hilbert space $H$ we understand a triple of representations $\pi_A \colon A \to B(H)$, $\pi_B \colon B \to B(H)$ and $U \in M(C(\G) \otimes K(H))$, a unitary representation of $\G$, which satisfies the following conditions.
\begin{itemize}
	\item $\pi_A(A)$ and $\pi_B(B)$ commute;
	\item $U^*(1 \otimes \pi_B(b)) U = (\id \otimes \pi_B)\beta(b)$ for any $b \in B$;
	\item $\sigma(U)(\pi_A(a) \otimes 1)\sigma(U)^* = (\pi_A \otimes \id)\alpha(a)$ for any $a \in A$ (where $\sigma$ denotes the tensor flip).
\end{itemize}
%	We define the C*-algebra $A \rtimes \G \ltimes B$ to be the closure of
%	\[\alpha(A)_{12} \dD(c_0(\dG))_{23} \beta(B)_{34} \subset \cL(A \otimes L^2(G) \otimes L^2(G) \otimes B).\]
%	In other words, 
	Recall that we denote the dual discrete quantum group of $\G$ by $\QG$. 
	Take the algebraic cores \cite[Definition 3.15]{Kenny} $A_0$ and $B_0$ of $A$ and $B$.
	We define a $*$-algebra $\mathcal{A} = A_0 \rtimes_{\rm alg} \G \ltimes_{\rm alg} B_0$ as follows.
	\begin{itemize}
	\item As a vector space,  $\mathcal{A}$ is isomorphic to $A_0 \atimes c_c(\dG) \atimes B_0$. The element in $\mathcal{A}$ corresponding to $a \otimes x \otimes b$ is denoted by $a x b$ for $a \in A_0, x \in c_c(\dG), b \in B_0$.
	\item The product is given by $(a x b)(a' x' b') = a (x_{(1)} \triangleright a') x_{(2)} x'_{(1)} (b \triangleleft x'_{(2)}) b'$ for $a,a' \in A_0, x,x' \in c_c(\dG)$, $b,b' \in B_0$. Notice that the sum on the right hand side is finite since $a'$ and $b$ are in the respective algebraic cores.
	\end{itemize}
	Let $A \rtimes \G \ltimes B$ be the universal C*-completion of $\mathcal{A}$.
	\begin{rem}
	In the von Neumann algebra setting, a similar construction arises from Popa's symmetric enveloping algebra \cite{Popa} (or the Longo--Rehren inclusion \cite{LongoRehren}) of a subfactor of the form $M^\G \subset M$ for a minimal action of $\G$ on a factor $M$.
	\end{rem}
	\begin{prop}
	We have the following.
	\begin{itemize}
	\item The C*-algebras $A,B$ and $c_0(\dG)$ are nondegenerate C*-subalgebras in the multiplier C*-algebra $M(A \rtimes \G \ltimes B)$.
	\item There is a natural one-to-one correspondence between the covariant representations of $(A,B,\G)$ and $*$-representations of $A \rtimes \G \ltimes B$.
	\end{itemize}
	\end{prop}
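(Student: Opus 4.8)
The plan is to establish this via the universal property of the completion, in three stages, together with the standard dictionary between unitary representations of $\G$ and nondegenerate representations of $c_0(\dG)$. First I would make precise the multiplier structure of the $*$-algebra $\mathcal{A}=A_0\rtimes_{\rm alg}\G\ltimes_{\rm alg}B_0$. Since $A_0$ and $B_0$ are algebraic cores they are idempotent and possess local units, while $c_c(\dG)=\bigoplus_\alpha M_{n_\alpha}$ has the finite partial sums of the block identities as local units; using the explicit product formula one checks that multiplication into the appropriate tensor leg defines nondegenerate $*$-homomorphisms $\iota_A\colon A_0\to M(\mathcal{A})$, $\iota_B\colon B_0\to M(\mathcal{A})$ and $\iota_{\dG}\colon c_c(\dG)\to M(\mathcal{A})$, that $axb=\iota_A(a)\iota_{\dG}(x)\iota_B(b)$ so that the notation is consistent and $\mathcal{A}$ is the linear span of such triple products, and that inside $M(\mathcal{A})$ these three subalgebras satisfy exactly the following relations: $\iota_A(A_0)$ commutes with $\iota_B(B_0)$, and $\iota_{\dG}$ interacts with $\iota_A$ and with $\iota_B$ through the comodule actions $\triangleright$ and $\triangleleft$; conversely these relations already entail the product formula, so a $*$-representation of $\mathcal{A}$ is the same thing as a triple of $*$-representations of $A_0$, $B_0$ and $c_c(\dG)$ satisfying them.

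Next I would show the universal $C^*$-completion exists. A nondegenerate $*$-representation $\pi$ of $\mathcal{A}$ extends to $M(\mathcal{A})$ and so restricts to nondegenerate $*$-representations of $A_0$, $B_0$ and $c_c(\dG)$; since the algebraic cores are dense in $A$, $B$ and $c_c(\dG)$ is dense in $c_0(\dG)$, these extend to $*$-representations of $A$, $B$ and $c_0(\dG)$, whence $\|\pi(axb)\|\le\|a\|\,\|x\|\,\|b\|$. Thus the universal $C^*$-seminorm on $\mathcal{A}$ is finite, and it is a norm because there is a faithful covariant representation --- a regular one, induced from faithful representations of $A$ and $B$ together with the regular corepresentation of $\G$ (which contains every irreducible representation, so the associated representation of $c_0(\dG)$ is faithful). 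Hence $A\rtimes\G\ltimes B$ is a genuine completion of $\mathcal{A}$, the maps $\iota_A$, $\iota_B$, $\iota_{\dG}$ take values in $M(A\rtimes\G\ltimes B)$, and being nondegenerate they extend continuously to embeddings of $A$, $B$ and $c_0(\dG)$ as nondegenerate $C^*$-subalgebras of $M(A\rtimes\G\ltimes B)$; this is the first assertion.

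For the correspondence I would use the standard bijection, for the compact quantum group $\G$, between unitary representations $U\in M(C(\G)\otimes K(H))$ on $H$ and nondegenerate $*$-representations $\pi_{\dG}$ of $c_0(\dG)=\bigoplus_\alpha M_{n_\alpha}$ on $H$; under it the two covariance conditions in the definition of a covariant representation translate precisely into the relations between $\iota_{\dG}$ and $\iota_A$, and between $\iota_{\dG}$ and $\iota_B$, from the first stage, while commutation of $\pi_A(A)$ with $\pi_B(B)$ corresponds to the relation between $\iota_A$ and $\iota_B$. Given a covariant representation $(\pi_A,\pi_B,U)$, the formula $\pi(axb):=\pi_A(a)\pi_{\dG}(x)\pi_B(b)$ therefore defines a $*$-representation of $\mathcal{A}$, which by the previous stage is bounded and extends to $A\rtimes\G\ltimes B$; conversely, restricting a $*$-representation of $A\rtimes\G\ltimes B$ to $\mathcal{A}$, passing to its essential subspace to make it nondegenerate, and extending the resulting restrictions to $A_0$, $B_0$ and $c_c(\dG)$ as in the second stage produces a covariant representation. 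These two procedures are mutually inverse and plainly respect unitary equivalence and direct sums, which is the required naturality.

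The only genuinely delicate point is the first stage: since $\mathcal{A}$ is highly nonunital, one must juggle the local units of $A_0$, $B_0$ and $c_c(\dG)$ simultaneously and keep careful track of the Sweedler components in $(axb)(a'x'b')=a(x_{(1)}\triangleright a')x_{(2)}x'_{(1)}(b\triangleleft x'_{(2)})b'$ in order to verify that the one-sided multiplications really assemble into well-defined nondegenerate $*$-homomorphisms into $M(\mathcal{A})$ with $axb=\iota_A(a)\iota_{\dG}(x)\iota_B(b)$, and to read off the defining relations. Everything after that is a routine application of the universal property together with the representation theory of $\G$.
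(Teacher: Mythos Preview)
Your overall architecture is reasonable, but there is a genuine gap in the second stage that the paper's proof is specifically designed to address. You write that the restrictions of a nondegenerate $*$-representation of $\mathcal{A}$ to $A_0$ and $B_0$ ``extend to $*$-representations of $A$, $B$'' because ``the algebraic cores are dense''. Density alone only guarantees extension to the \emph{universal} $C^*$-envelopes $A^m$, $B^m$ of $A_0$, $B_0$; since the paper works with reduced actions, one has in general $A\neq A^m$. Hence your bound $\|\pi(axb)\|\le\|a\|_A\,\|x\|\,\|b\|_B$ is unjustified, the continuous extension of $\iota_A$ from $A_0$ to $A$ is not automatic, and the map from $*$-representations of $\mathcal{A}$ to covariant representations lands a priori in covariant representations of $(A^m,B^m,\G)$ rather than of $(A,B,\G)$.

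This is exactly the point the paper handles first: it proves $A\rtimes\G\rtimes\dG\simeq A^m\otimes K(L^2(\G))$ and hence $A^m\rtimes\G\simeq A\rtimes\G$, using Fischer's maximalisation. With that identification in hand, $A$ embeds into $M(A\rtimes\G)\simeq M(A^m\rtimes\G)\to M(A\rtimes\G\ltimes B)$, and the covariance with $c_c(\dG)$ forces any $*$-representation of $A_0$ arising from $\mathcal{A}$ to factor through $A$. Your argument can be repaired by inserting this step, but without it the proof does not go through. A secondary difference: the paper establishes injectivity of the embedding via the nondegenerate $*$-homomorphism $A\rtimes\G\ltimes B\to M((A\rtimes\G)\otimes(\G\ltimes B))$, $axb\mapsto (a\otimes 1)\dD(x)(1\otimes b)$, rather than by exhibiting a faithful ``regular'' covariant representation as you propose; your route is plausible, but you would need to specify the representation and verify faithfulness on the universal completion.
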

	\begin{proof}
	Let $A^m$ (resp.\ $B^m$) be the universal C*-envelope of $A_0$ (resp.\ $B_0$) equipped with a right (resp.\ left) universal action of $\G$, to be denoted $\alpha^u$. We first observe that $A^m$ coincides with the maximalization of $A$ in the sense of \cite[Definition 6.1]{Fischer} (See \cite{EQ} for the group case).
	To this end, we only need to show
	\[A \rtimes \G \rtimes \dG \simeq A^m \otimes K(L^2(\G))\]
	(see the proof of \cite[Theorem 6.4]{Fischer}).
	Since $(\id \otimes \varphi(\cdot)1)\circ  \alpha^u \colon A^m \otimes C(\G) \to (A^m)^\alpha \ot 1= A^\alpha \ot 1$ is a faithful conditional expectation, the action map $A_0 \to A_0 \otimes_{\rm alg} \cO(\G)$ extends to
	\[\alpha^m \colon A \to A^m \otimes C(\G).\]
	Now the triplet $((\id \otimes \rho)\alpha^m, \id_{A^m} \otimes \hat \lambda, \id_{A^m}
 \otimes \lambda)$ gives a $*$-homomorphism
	\[A \rtimes \G \rtimes \dG \to A^m \otimes K(L^2(\G)).\]
	Conversely the algebraic crossed product $A_0 \rtimes_{\rm alg} \G \rtimes_{\rm alg} \dG$ is isomorphic to $A_0 \otimes_{\rm alg} F(L^2(\G))$ where $F(L^2(\G)) = {\rm span} \{xy \in x \in \cO(\G), y \in c_c(\dG)\} \subset K(L^2(\G))$ is the $*$-algebra of all finite rank operators supported on finitely many components in $\Irr_\G$. Hence the universal completion of $A_0 \rtimes_{\rm alg} \G \rtimes_{\rm alg} \dG$ is naturally isomorphic to $A^m \otimes K(L^2(\G))$ and hence the $*$-homomorphism $A_0 \rtimes_{\rm alg} \G \rtimes_{\rm alg} \dG \to A \rtimes \G \rtimes \dG$ induces a map $A^m \otimes K(L^2(\G)) \to A \rtimes \G \rtimes \dG$. Since the two  maps described above are inverse to each other, we get the conclusion.
	In particular, the natural map $A^m \rtimes \G \to A \rtimes \G$ is an isomorphism.

	Since $A_0$ and $c_c(\dG)$ satisfy the commutation relation as in $A_0 \rtimes_{\rm alg} \G$, we obtain a nondegenerate $*$-homomorphism $A \rtimes \G \simeq A^m \rtimes \G \to M(A \rtimes \G \ltimes B)$. Since there exists a non-degenerate $*$-homomorphism
	\[A \rtimes \G \ltimes B \to M((A \rtimes \G) \otimes (\G \ltimes B)) \colon a x b \mapsto (a \otimes 1) \dD(x) (1 \otimes b),\]
	the map is injective. Similarly we get a nondegenerate injective $*$-homomorphism $B \to M(A \rtimes \G \ltimes B)$. This proves the first assertion in the proposition.

	For the second assertion, by definition of $\mathcal{A}$, we obtain a $*$-representation of $\mathcal{A}$ from a covariant representation of $(A,B,\G)$. Conversely the covariant representation of $(A,B,\G)$ is obtained by the first assertion from a $*$-representation of $\mathcal{A}$.
	\end{proof}
	\begin{lem}\label{lem ce}
	Let $\varphi_A$ be a $\G$-invariant state. Then $\varphi_A$ induces a conditional expectation
	\[A \rtimes \G \ltimes B \to \G \ltimes B \colon a x b \mapsto \varphi_A(a) x b,\]
	where $a \in A$, $x \in c_c(\dG)$, $b \in B$.
	\end{lem}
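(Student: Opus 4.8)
The plan is to produce the conditional expectation first at the level of the algebraic core $\mathcal{A} = A_0 \rtimes_{\rm alg} \G \ltimes_{\rm alg} B_0$, where the formula $a x b \mapsto \varphi_A(a) x b$ makes literal sense, check that it is a $*$-algebra homomorphism-like compatible map (more precisely, a conditional expectation onto the subalgebra $c_c(\dG) \atimes B_0 \cong \G \ltimes_{\rm alg} B_0$), and then show it extends to the universal C*-completions. First I would verify multiplicativity-of-the-bimodule-type identity: for $a x b, a' x' b' \in \mathcal{A}$ one has $(axb)(a'x'b') = a(x_{(1)} \triangleright a') x_{(2)} x'_{(1)} (b \triangleleft x'_{(2)}) b'$, and applying the candidate map $E_0$ gives $\varphi_A\big(a (x_{(1)} \triangleright a')\big) x_{(2)} x'_{(1)} (b \triangleleft x'_{(2)}) b'$. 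Using that $\varphi_A$ is $\G$-invariant, i.e.\ $(\id \otimes \varphi_A)\alpha_A(a') = \varphi_A(a') 1$ — equivalently the contraction of $x_{(1)} \triangleright a'$ against $\varphi_A$ collapses the leg of $c_c(\dG)$ carried by that action — one checks that $\varphi_A\big(a(x_{(1)}\triangleright a')\big) x_{(2)} = \varphi_A(a) \varphi_A(a') x$ after summing, so that $E_0((axb)(a'x'b')) = \varphi_A(a) E_0((a'x'b')) \cdot (\text{something})$ in the appropriate bimodule sense; in particular $E_0$ restricts to the identity on $\G \ltimes_{\rm alg} B_0$ and is a $\G \ltimes_{\rm alg} B_0$-bimodule map. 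The key computation is the interaction of $\G$-invariance of $\varphi_A$ with the comultiplication $x \mapsto x_{(1)} \otimes x_{(2)}$ in $c_c(\dG)$ appearing in the product formula.

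Second, to obtain boundedness and hence extendability to $A \rtimes \G \ltimes B$, I would realise $E_0$ as compression by a projection in a suitable representation. Concretely, take the GNS-type construction: represent $\mathcal{A}$ on a Hilbert module (or Hilbert space, after composing with a state on $\G \ltimes B$) built from $A$ via $\varphi_A$, so that $A$ acts through its GNS representation $(\lambda_A, L^2(A,\varphi_A), \Omega_A)$ while $c_c(\dG)$ and $B_0$ act on a complementary factor; the vector state associated to $\Omega_A$ on the $A$-leg implements $\varphi_A$, and $E_0$ becomes the slice map $(\omega_{\Omega_A} \otimes \id)$, manifestly completely positive and contractive. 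Alternatively, and perhaps more cleanly, invoke the universal property from the preceding Proposition: a covariant representation of $(A,B,\G)$ together with the $\G$-invariant state $\varphi_A$ gives rise, via the corresponding representation of $A \rtimes \G \ltimes B$ and a slice by $\varphi_A$ on the $A$-part, to the desired map; one then identifies it on the dense subalgebra $\mathcal{A}$ with $E_0$. Either way the continuity of $E$ follows from complete positivity plus $E(1)=1$.

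The main obstacle I expect is the algebraic verification that $E_0$ is genuinely a conditional expectation — i.e.\ the bimodule property $E_0(\xi \eta) = \xi E_0(\eta)$ and $E_0(\eta \xi) = E_0(\eta)\xi$ for $\xi \in \G \ltimes_{\rm alg} B_0$, $\eta \in \mathcal{A}$ — because the product in $\mathcal{A}$ entangles the $A_0$, $c_c(\dG)$ and $B_0$ legs through both the $\triangleright$ and $\triangleleft$ actions and the coproduct, so one must track carefully that applying $\varphi_A$ to the $A_0$-leg decouples exactly the pieces of $c_c(\dG)$ that came from the $A$-side action and leaves the $B$-side structure untouched. Once the $\G$-invariance of $\varphi_A$ is used in the form $(\id \otimes \varphi_A)\alpha_A = \varphi_A(\cdot)1$ this is bookkeeping, but it is the step where an error is most likely. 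The extension-by-continuity and the assertion that the range is precisely $\G \ltimes B$ (the C*-completion of $\G \ltimes_{\rm alg} B_0$, which by the Proposition sits as a nondegenerate subalgebra of the multiplier algebra) are then routine, using that a surjective-onto-the-dense-subalgebra completely positive contractive idempotent between C*-algebras is a conditional expectation.
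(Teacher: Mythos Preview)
Your second approach --- representing $A \rtimes \G \ltimes B$ on a Hilbert $(\G \ltimes B)$-module built from the GNS space of $\varphi_A$, and realising the conditional expectation as the compression by $\Omega_A$ --- is exactly the paper's proof. The paper makes this concrete: it takes $\cE = L^2(A,\varphi_A) \otimes (\G \ltimes B)$, writes the action of $\mathcal{A}$ explicitly as $(axb)(a'\Omega \otimes x'b') = a(x_{(1)} \triangleright a')\Omega \otimes x_{(2)}x'_{(1)}(b \triangleleft x'_{(2)})b'$, and then defines $E(y) = \lim_i (\Omega \otimes e_i,\, y(\Omega \otimes e_i))$ for an approximate unit $(e_i)$ of $\G \ltimes B$. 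No separate algebraic verification is carried out.

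One caution about your first route: the displayed identity $\varphi_A\big(a(x_{(1)} \triangleright a')\big)\, x_{(2)} = \varphi_A(a)\varphi_A(a')\, x$ is \emph{false} in general --- it would force $\varphi_A$ to be multiplicative. $\G$-invariance only gives $\varphi_A(x_{(1)} \triangleright a')\, x_{(2)} = \varphi_A(a')\, x$, which is precisely what you need for the \emph{bimodule} property $E_0((xb)(a'x'b')) = (xb)\,E_0(a'x'b')$, not for any stronger multiplicativity. Since you ultimately fall back on the Hilbert-module realisation, this slip is harmless for the final argument, but the algebraic paragraph as written does not stand on its own.
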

	\begin{proof}
	Take the GNS construction $(L^2(A,\varphi_A),\Omega)$ for $\varphi_A$.
	Consider the Hilbert $ \G \ltimes B$-module $\cE = L^2(A,\varphi_A) \otimes \G \ltimes B$. Then $A \rtimes \G \ltimes B$ admits a representation on $\cE$ defined by (the continuous extension of) the formula
	\[(axb)(a'\Omega \otimes x' b') = a (x_{(1)} \triangleright a') \Omega \otimes x_{(2)} x'_{(1)} (b \triangleleft x'_{(2)}) b'\]
	for $a, a' \in A_0$, $x,x' \in c_c(\dG)$, $b,b' \in B_0$, where again $A_0$ and $B_0$ denote the respective algebraic cores. Take an approximate unit $(e_i)_{i \in I}$ of $\G \ltimes B$. Then the desired conditional expectation is given by
	\[x \mapsto \lim_{i \in I} (\Omega \otimes e_i, x(\Omega \otimes e_i)),\]
	hence it is well-defined.
	\end{proof}
	Using this lemma, we give an easy structural result on this crossed product for later use.
	\begin{prop}\label{crossed fd}
	Let $D$ be a finite dimensional C*-algebra with a right $\G$-action and $B$ be a separable C*-algebra with a left $\G$-action.
	\begin{enumerate}
	\item If $B$ is finite dimensional, then the C*-algebra $D \rtimes \G \ltimes B$ is a direct sum of matrix algebras.
	\item If $B$ is of type I, then the C*-algebra $D \rtimes \G \ltimes B$ is also of type I.
	\end{enumerate}
	\end{prop}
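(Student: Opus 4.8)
\emph{Approach.} The two parts use different mechanisms: for (1) I would pin $D\rtimes\G\ltimes B$ down directly with the block projections of $c_0(\dG)$, while for (2) I would use the conditional expectation of Lemma \ref{lem ce} to embed $D\rtimes\G\ltimes B$ into a matrix algebra over $\G\ltimes B$.

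\emph{Part (1).} Write $C=D\rtimes\G\ltimes B$ and, for $\alpha\in\Irr_\G$, let $p_\alpha\in c_c(\dG)\subseteq M(C)$ be the unit of the summand $M_{n_\alpha}\subseteq c_0(\dG)$. The key point is \emph{local finiteness}: since $D$ and $B$ are finite dimensional, the actions of $\G$ on them involve only finitely many irreducible corepresentations, so for each $\beta$ there are only finitely many $\gamma$ with $p_\beta D p_\gamma\neq 0$, and likewise for $B$; moreover every such $p_\beta D p_\gamma$ (a linear image of $D$) is finite dimensional. It follows that $p_\beta D$ and $D p_\gamma$ are finite dimensional, hence --- using also the trivial identity $p_\beta x p_\gamma=\delta_{\beta\gamma}p_\beta x$ in $c_c(\dG)$ --- that $p_\beta (axb)p_\gamma$ lies in $\bigoplus_\delta (p_\beta D)\,M_{n_\delta}\,(B p_\gamma)$ with $\delta$ ranging over a finite, $(\beta,\gamma)$-dependent set; so $p_\beta C p_\gamma$ is finite dimensional. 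Running the same bookkeeping on a product $(axb)p_\alpha$ or $p_\alpha(axb)$ gives $Cp_\alpha\subseteq q_F C$ and $p_\alpha C\subseteq Cq_{F'}$ for finite $F,F'$ (with $q_F=\sum_{\alpha\in F}p_\alpha$), so the algebraically generated --- hence closed --- ideal $I_\alpha:=Cp_\alpha C$ satisfies $I_\alpha\subseteq q_F C q_{F'}$ and is \emph{finite dimensional}. Since $c_0(\dG)$ is nondegenerate in $M(C)$ by the Proposition, $\overline{\sum_\alpha I_\alpha}=\overline{C\,c_0(\dG)\,C}=C$; thus any irreducible representation of $C$ is non-zero, hence irreducible, on some finite-dimensional ideal $I_\alpha$. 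Therefore every irreducible representation of $C$ is finite dimensional, and --- as $\widehat{I_\alpha}$ is a finite discrete open subset of $\widehat C$ --- its class is isolated in $\widehat C$. Hence $\widehat C$ is discrete with finite-dimensional irreducible representations, so $C$ is the $c_0$-direct sum of its elementary ideals, each of which is a matrix algebra; the index set is countable because $D$, $B$ and $\Irr_\G$ are.

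\emph{Part (2).} Fix a faithful $\G$-invariant state $\varphi_D$ on $D$ (of the form $\operatorname{Tr}(\rho\,\cdot)$ with $\rho$ invertible). By Lemma \ref{lem ce} it gives a \emph{faithful} conditional expectation $E\colon C\to\G\ltimes B$, so $C$ becomes a right Hilbert $(\G\ltimes B)$-module $\cE$ with $\langle s,t\rangle=E(s^*t)$. Choosing a $\varphi_D$-orthonormal basis $a_1,\dots,a_N$ of $D$ and using the crossed-product relations between $D$ and $c_c(\dG)$ together with $[D,B]=0$ to write $\mathcal A=\operatorname{span}\{a_i\}\cdot\mathcal B_0$ (with $\mathcal B_0$ the algebraic core of $\G\ltimes B$), one checks that $(\beta_i)_i\mapsto\sum_i a_i\beta_i$ is a unitary $(\G\ltimes B)^N\to\cE$, so $\cE$ is free of rank $N$. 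A short computation with the structure constants of $D$ and the straightening identities $\beta\, a_i=\sum_l a_l\,\gamma_{li}(\beta)$ (valid as $\mathcal A=\operatorname{span}\{a_i\}\cdot\mathcal B_0$) shows that left multiplication by any $a_k\beta$ is, in this trivialisation, left multiplication by a matrix over $\G\ltimes B$; since $\mathcal A$ is spanned by such elements, the left regular representation --- faithful, as $E$ is --- embeds $C$ as a C*-subalgebra of $\cK_{\G\ltimes B}(\cE)\cong M_N(\G\ltimes B)$. It then remains to invoke that $\G\ltimes B$ is type I whenever $B$ is: this is the fact that crossed products by compact quantum groups preserve type I, proved by reducing along the canonical composition series of $B$ (automatically $\G$-invariant, being characteristic) and using exactness of $\G\ltimes(-)$ on invariant ideals together with stability of type I under ideals, quotients, extensions and inductive limits. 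Then $M_N(\G\ltimes B)$, and hence its subalgebra $C$, is type I.

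\emph{Main obstacle.} In Part (1) the delicate part is the local-finiteness bookkeeping --- making precise, via the fusion rules of $\G$ and the finitely many isotypic components of $D$ and $B$, that the index sets controlling $p_\alpha C$, $Cp_\alpha$ and $p_\beta C p_\gamma$ are finite and depend only on the indices, not on the elements involved; once this is in place the structure theorem is formal. In Part (2) the only non-formal input is the preservation of type I under compact-quantum-group crossed products; the rest is the identification, via the conditional expectation attached to a faithful invariant state on $D$, of $D\rtimes\G\ltimes B$ with a C*-subalgebra of $M_N(\G\ltimes B)$.
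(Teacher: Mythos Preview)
Your argument is correct in outline, but both parts take a more laborious route than the paper's.

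\textbf{Part (1).} The paper avoids all the fusion-rule bookkeeping: it takes an arbitrary representation $\pi$ of $D\rtimes\G\ltimes B$, passes to the associated covariant triple $(\pi_D,\pi_B,U)$, and uses compactness of $\G$ to decompose the unitary $U$ into finite-dimensional irreducibles $H=\bigoplus_i H_i$. Then for $\xi\in H_i$ the cyclic subspace $(D\rtimes\G\ltimes B)\xi=\pi_D(D)\pi_B(B)H_i$ is visibly finite dimensional, so every representation is a direct sum of finite-dimensional ones. Your local-finiteness analysis via the block projections $p_\alpha$ reaches the same conclusion and is valid, but the covariant-representation argument is both shorter and more transparent; your approach has the advantage of giving concrete finite-dimensional ideals $I_\alpha$ exhausting $C$.

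\textbf{Part (2).} Here both proofs start from the conditional expectation $E$ of Lemma~\ref{lem ce}, and your identification of the associated Hilbert module with $(\G\ltimes B)^N$ is essentially a hands-on version of the paper's observation that $\varphi_D(d^*d)\ge\lambda\,d^*d$ forces $E(x^*x)\ge\lambda\,x^*x$, i.e.\ $E$ has finite index. The genuine difference is in how you show $\G\ltimes B$ is type~I. The paper simply notes the inclusion $\G\ltimes B\subset B\otimes K(L^2(\G))$ (coming from the fact that $\hat\lambda(c_0(\dG))\subset K(L^2(\G))$), and then invokes that type~I passes to separable C*-subalgebras. Your composition-series reduction is heavier, and the step ``the canonical composition series is automatically $\G$-invariant, being characteristic'' is not justified: a coaction $\beta\colon B\to C(\G)\otimes B$ is not a family of automorphisms when $C(\G)$ has no characters, so ``characteristic'' does not immediately apply. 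The conclusion is true, but you should either supply the invariance argument or --- much more simply --- replace the whole paragraph by the inclusion $\G\ltimes B\subset B\otimes K(L^2(\G))$.
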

	\begin{proof}
	(1) We only need to show that any representation of $D\rtimes \G \ltimes B$ decomposes into a direct sum of finite dimensional representations. To this end, we take a representation $\pi$ of $D \rtimes \G \ltimes B$ on a Hilbert space $H$ and take the associated covariant representation $(\pi_D,\pi_B,U)$. Since $\G$ is compact, $U$ decomposes into a direct sum of finite dimensional  irreducible representations: $H = \bigoplus_i H_i$. Then for each $\xi \in H_i$, its orbit $(D \rtimes \G \ltimes B) \xi = \pi_D(D) \pi_B(B) H_i$ is finite dimensional. By a simple maximality argument, we get the conclusion.

	(2) We fix a faithful $\G$-invariant state $\varphi_D$ on $D$. Since $D$ is finite dimensional, there exists $\lambda > 0$ such that for all $d \in D$ 
	\[\varphi_D(d^* d) 1 \geq \lambda d^* d.\]
	By Lemma \ref{lem ce}, the map
	\[E \colon D \rtimes \G \ltimes B \to \G \ltimes B \colon d x a \mapsto \varphi_D(d) x a\]
	defines a conditional expectation.
	Then $E(x^* x) \geq \lambda x^* x$ for any $x \in D \rtimes \G \ltimes B$. Therefore the conditional expectation $E^{**}$ from $(D \rtimes \G \ltimes B)^{**}$ to $(\G \ltimes B)^{**}$ is of finite index, hence $D \rtimes \G \ltimes B$ is of type I, as so is $\G \ltimes B \subset B \otimes K(L^2(G))$. The last argument uses the fact that in the separable context the type I property passes to any C*-subalgebra, as explained in the proof of \cite[Corollary 9.4.5]{BrownOzawa}.
	\end{proof}
	Similarly for a $\G$-equivariant Hilbert $B$-module $\cE$,	one can define a Hilbert $A \rtimes \G \ltimes B$-module $A \rtimes \G \ltimes \cE$ as a corner of the linking algebra $A \rtimes \G \ltimes \cK(\cE \oplus B)$.
	More concretely, the Hilbert module $A \rtimes \G \ltimes \cE$ is the completion of the pre-Hilbert module $\tilde \cE_0$ defined as follows:
	\begin{itemize}
	\item As a vector space, $\tilde \cE_0$ is isomorphic to $A_0 \atimes c_c(\dG) \atimes \cE$, where again $A_0$ denotes the respective algebraic core. Again the element in $\tilde \cE_0$ corresponding to $a \otimes x \otimes b$ is denoted by $a x b$ for $a \in A_0, x \in c_c(\dG), b \in \cE$.
	\item The right $A \rtimes \G \ltimes B$-module structure is given by
	\[(axb) (a' x' b') = a (x_{(1)} \triangleright a') x_{(2)} x'_{(1)} (b \triangleleft x'_{(2)}) b'\]
	for $a,a' \in A_0, x,x' \in c_c(\dG)$, $b \in \cE$ and $b' \in B_0$.
	\item The inner product is given by
	\[(bxa,b'x'a') = a^* x^* (b,b') x' a'\]
	for $a,a' \in A_0, x,x' \in c_c(\dG)$, $b,b' \in \cE$. Here $bxa$ expresses an element of $\tilde \cE_0$ by the same commutation relation as in $\mathcal{A}$.
	\end{itemize}
	It is easy to see that $\cK(A \rtimes \G \ltimes \cE)$ is naturally isomorphic to $A \rtimes \G \ltimes \cK(\cE)$. In particular we get the following result.
	\begin{lem}
	For each $\G$-equivariant Hilbert $B$-module $\cE$, $A \rtimes \G \ltimes \cK(\cE)$ is Morita equivalent to $A \rtimes \G \ltimes B$.
	\end{lem}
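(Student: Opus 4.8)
The plan is to exhibit $A \rtimes \G \ltimes \cE$ itself as a Morita equivalence bimodule realising the asserted equivalence. Recall the standard fact from the theory of Hilbert C*-modules: for any C*-algebra $C$ and any \emph{full} Hilbert $C$-module $\cF$, the space $\cF$ --- equipped with its tautological left $\cK(\cF)$-module structure, the rank-one $\cK(\cF)$-valued inner product, and the given $C$-valued inner product --- is a $\cK(\cF)$--$C$ imprimitivity bimodule, so that $\cK(\cF)$ is Morita equivalent to $C$. I would apply this with $C = A \rtimes \G \ltimes B$ and $\cF = A \rtimes \G \ltimes \cE$. The left-hand algebra $\cK(A \rtimes \G \ltimes \cE)$ has just been identified, in the remark preceding the statement, with $A \rtimes \G \ltimes \cK(\cE)$ --- and this identification is compatible with the bimodule structure, both sides being corners of the linking algebra $A \rtimes \G \ltimes \cK(\cE \oplus B)$. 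Hence the lemma reduces to the single point that $A \rtimes \G \ltimes \cE$ is full over $A \rtimes \G \ltimes B$. (As is customary in this context we understand $\cE$ to be full over $B$; if it is not, exactly the same argument produces instead a Morita equivalence with $A \rtimes \G \ltimes I$, where $I$ is the closed span of $\{(b,b') : b, b' \in \cE\} \subseteq B$.)

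To check fullness I would pass to the dense pre-Hilbert submodule $\tilde\cE_0 \cong A_0 \atimes c_c(\dG) \atimes \cE$, where the inner products are precisely the elements $a_1^* x_1^* (b,b')\, x_2 a_2$ with $a_i \in A_0$, $x_i \in c_c(\dG)$, $b, b' \in \cE$. Since $\cE$ is full, the elements $(b,b')$ have dense linear span in $B$, so the closed two-sided ideal $J$ of $A \rtimes \G \ltimes B$ generated by these inner products coincides with the closed linear span of $\{a_1 x_1 c\, x_2 a_2 : a_i \in A_0,\ x_i \in c_c(\dG),\ c \in B\}$ (using that $A_0$ and $c_c(\dG)$ are $*$-closed and dense in $A$ and $c_0(\dG)$). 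Now invoke the first part of the Proposition above: $A$, $c_0(\dG)$ and $B$ are nondegenerate C*-subalgebras of $M(A \rtimes \G \ltimes B)$, so bounded approximate units $(u_\lambda)\subset A$ and $(v_\mu)\subset c_0(\dG)$ converge strictly to $1$; squeezing a given $c \in B$ between them gives $u_\lambda v_\mu c\, v_\mu u_\lambda \to c$ in norm with each such element lying in $J$, whence $B \subseteq J$. Finally, nondegeneracy of $B$ yields $J \supseteq \overline{(A \rtimes \G \ltimes B)\, B\, (A \rtimes \G \ltimes B)} = A \rtimes \G \ltimes B$, so $J = A \rtimes \G \ltimes B$ and $\cF$ is full.

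Granting the remark that precedes the statement, the argument is thus essentially formal. The only place that calls for (routine) care is the fullness verification --- concretely, checking that the braiding relations built into $\mathcal{A}$ and $\tilde\cE_0$ cause no trouble once one passes to norm closures and uses the nondegeneracy of the three canonical embeddings --- together with being explicit about the fullness hypothesis on $\cE$. I do not anticipate any genuine obstacle beyond this bookkeeping.
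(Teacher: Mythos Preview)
Your approach coincides with the paper's: the paper states the lemma immediately after recording the isomorphism $\cK(A \rtimes \G \ltimes \cE)\cong A \rtimes \G \ltimes \cK(\cE)$ and gives no further argument, so your reduction to fullness of $A\rtimes\G\ltimes\cE$ over $A\rtimes\G\ltimes B$ is exactly the content it leaves implicit. One small correction in your fullness check: since $B$ sits only in $M(A\rtimes\G\ltimes B)$ in general, the convergence $u_\lambda v_\mu\, c\, v_\mu u_\lambda \to c$ is strict rather than in norm and ``$B\subseteq J$'' is not literally well-posed; it is cleaner to observe directly that for every $axb\in\mathcal{A}$ the element $axb\cdot v_\mu u_\lambda$ lies in $J$ (it is of the form $a_1x_1 c\,x_2 a_2$) and converges in norm to $axb$, giving $\mathcal{A}\subseteq J$ and hence $J=A\rtimes\G\ltimes B$.
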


Finally we show that the construction preserves the exact sequences in a natural sense.

	\begin{lem}
	Let $A$ be a C*-algebra with a right $\G$-action $\alpha$. For a C*-algebra $B$ with a left $\G$-action $\beta$ and a $\G$-invariant ideal $I \subset B$, the sequence
	\[0 \to A \rtimes \G \ltimes I \to A \rtimes \G \ltimes B \to A \rtimes \G \ltimes (B/I) \to 0\]
	is exact.
	\end{lem}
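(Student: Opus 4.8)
The three crossed products are linked by the functorial maps induced by the equivariant $*$-homomorphisms $I \hookrightarrow B$ and $q\colon B \to B/I$; these exist because an equivariant $*$-homomorphism carries the algebraic core (in the sense of \cite[Definition 3.15]{Kenny}) into the algebraic core, hence induces a $*$-homomorphism of the dense $*$-subalgebras of the form $A_0 \atimes c_c(\dG) \atimes (\,\cdot\,)_0$, which extends to the universal C*-completions. Write $C = A \rtimes \G \ltimes B$ and denote by $\mathcal{A}_I \subseteq \mathcal{A}_B$ and $\mathcal{A}_{B/I}$ the relevant dense $*$-subalgebras (so $\mathcal{A}_B$ is the algebra $\mathcal{A}$ of the construction). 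I would begin with two elementary remarks: slicing the $\G$-action of $B$ by the Haar state of $\G$ identifies spectral subspaces, so $I_0 = I \cap B_0$ and the induced map carries $\mathcal{A}_B$ onto $\mathcal{A}_{B/I}$; and, $I$ being $\G$-invariant, the product formula of $\mathcal{A}_B$ keeps $\mathcal{A}_I = A_0 \atimes c_c(\dG) \atimes I_0$ a two-sided ideal of $\mathcal{A}_B$, so its closure $K := \overline{\mathcal{A}_I}$ inside $C$ is a closed two-sided ideal. Surjectivity of $\psi \colon C \to A \rtimes \G \ltimes (B/I)$ then follows since a $*$-homomorphism of C*-algebras has closed range and $\psi(\mathcal{A}_B) = \mathcal{A}_{B/I}$ is dense, while the composition $A \rtimes \G \ltimes I \to C \xrightarrow{\psi} A \rtimes \G \ltimes (B/I)$ vanishes on $\mathcal{A}_I$, so $K \subseteq \ker \psi$.

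For injectivity of the left-hand map $\iota$ I would use the correspondence between covariant representations of $(A,B,\G)$ and representations of $A \rtimes \G \ltimes B$ established above. Starting from a faithful representation of $A \rtimes \G \ltimes I$, which we may assume nondegenerate by passing to its essential subspace, the associated covariant triple $(\pi_A, \pi_I, U)$ has $\pi_I$ nondegenerate (since $I$ is nondegenerate in the multiplier algebra), so $\pi_I$ extends uniquely to a representation $\pi_B$ of $B$ on the same Hilbert space. The key point is that $(\pi_A, \pi_B, U)$ is again covariant: both the commutation of $\pi_B(B)$ with $\pi_A(A)$ and the relation $U^*(1 \otimes \pi_B(b))U = (\id \otimes \pi_B)\beta(b)$ extend from $I$ to $B$ by combining the ideal property, the nondegeneracy of $\pi_I$, and the density condition $\overline{\beta(I)(C(\G) \otimes 1)} = C(\G) \otimes I$. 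Feeding $(\pi_A, \pi_B, U)$ back through the correspondence produces a representation of $C$ whose composition with $\iota$ agrees, on the dense subalgebra $\mathcal{A}_I$, with the representation we started from; hence $\iota$ is injective, so isometric, and $\iota(A \rtimes \G \ltimes I) = K$.

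It remains to prove $\ker \psi \subseteq K$, for which the plan is to identify $C/K$ with $A \rtimes \G \ltimes (B/I)$ through their representation theories. Let $\rho$ be a nondegenerate representation of $C$, with covariant triple $(\pi_A, \pi_B, U)$, and let $\hat\pi$ be the representation of $c_0(\dG)$ determined by $U$; then $\rho$ annihilates $K$ if and only if $\rho(a\,x\,b) = \pi_A(a)\hat\pi(x)\pi_B(b) = 0$ for all $a \in A_0$, $x \in c_c(\dG)$, $b \in I_0$. Running $x$ through the approximate unit of $c_0(\dG)$ consisting of the finite sums of the unit projections of the blocks $M_{n_{\alpha}}$ — which $\hat\pi$ sends strongly to $1$ — this forces $\pi_A(a)\pi_B(b) = 0$, and since $A$ is nondegenerate in $M(C)$ the representation $\pi_A$ is nondegenerate, whence $\pi_B|_I = 0$ (the converse being clear). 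But $\pi_B|_I = 0$ precisely when $\pi_B = \bar\pi_B \circ q$ for a representation $\bar\pi_B$ of $B/I$, in which case $(\pi_A, \bar\pi_B, U)$ is a covariant triple for $(A, B/I, \G)$ because $q$ is $\G$-equivariant, and conversely every covariant triple for $(A,B/I,\G)$ arises this way. Consequently (after the harmless reduction to nondegenerate representations) every representation of $C/K$ factors through the canonical surjection $\bar\psi \colon C/K \to A \rtimes \G \ltimes (B/I)$, so $\bar\psi$ is faithful; hence $\ker \psi = K$ and the sequence is exact.

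The step I expect to be the real obstacle is the representation-theoretic bookkeeping of the last two paragraphs: showing that a covariant representation of $(A,I,\G)$ extends to one of $(A,B,\G)$ — where the density condition for the restricted action is carrying the weight — and the nondegeneracy manipulations relating ``$\rho$ kills $\mathcal{A}_I$'' to ``$\pi_B|_I = 0$''. The remaining ingredients — equivariant maps respecting algebraic cores, $\mathcal{A}_I$ being an ideal, and $*$-homomorphisms of C*-algebras having closed range — are routine.
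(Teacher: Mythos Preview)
Your proof is correct and follows essentially the same strategy as the paper's: injectivity of the left map is obtained by extending a faithful nondegenerate covariant representation of $(A,I,\G)$ to one of $(A,B,\G)$ via the unique extension of $\pi_I$ to $B \subset M(I)$, and middle exactness is then deduced from the correspondence between representations of the crossed product and covariant triples. The paper compresses your last paragraph into a single sentence, observing that the universal property of $A \rtimes \G \ltimes (B/I)$ directly furnishes an inverse to the induced map $(A \rtimes \G \ltimes B)/(A \rtimes \G \ltimes I) \to A \rtimes \G \ltimes (B/I)$ --- this is precisely your argument that every representation of $C/K$ factors through $\bar\psi$, just phrased in terms of the universal arrow rather than an arbitrary faithful representation.
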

	\begin{proof}
	Clearly the map $A \rtimes \G \ltimes B \to A \rtimes \G \ltimes (B/I)$ is surjective. To see the injectivity of $A \rtimes \G \ltimes I \to A \rtimes \G \ltimes B$, take a faithful nondegenerate representation $\pi$ of $A \rtimes \G \ltimes I$ on a Hilbert space $H$. Consider  the associated covariant representation $(\pi_A,\pi_I,U)$ and the unique extension of $\pi_I$ to $B$, denoted by $\pi_B$. Since
	\[U^*(1 \otimes \pi_I(b)) U = (\id \otimes \pi_I)\beta(b)\]
	for any $b \in M(I)$, the triple $(\pi_A,\pi_B,U)$ is a covariant representation. Since the associated representation $A \rtimes \G \ltimes B$ is an extension of $\pi$, we conclude that the map $A \rtimes \G \ltimes I \to A \rtimes \G \ltimes B$ is injective.

	It remains to prove that the sequence in the lemma is exact at the middle term. Since the composition is zero, we can induce a homomorphism
	\[(A \rtimes \G \ltimes B)/(A \rtimes \G \ltimes I) \to A \rtimes \G \ltimes (B/I).\]
	On the other hand, the universality of the crossed product induces the inverse of the map. Hence the homomorphism is an isomorphism.
	\end{proof}
	\begin{prop}\label{prop:functor}
	Let $\G$ be a compact quantum group with a countable dual.
	Fix a separable C*-algebra $A$ with a right $\G$-action. The crossed product introduced above gives rise to a triangulated functor on the equivariant Kasparov category
	\[A \rtimes \G \ltimes \cdot \colon KK^{\G} \to KK.\]
	\end{prop}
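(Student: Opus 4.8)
The plan is to verify that $F:=A\rtimes\G\ltimes(-)$, regarded as a functor from the category of separable $\G$-C*-algebras and $\G$-equivariant $*$-homomorphisms to separable C*-algebras, is homotopy invariant, $C^*$-stable and exact; by the universal characterisation of equivariant Kasparov theory as the universal such functor (see \cite{MeyerNest3}) it then factors through an additive functor $KK^{\G}\to KK$, and it will remain to check that the latter is triangulated (an alternative would be to transport $\G$-equivariant Kasparov cycles $(\cE,\phi,T)$ directly to cycles over $A\rtimes\G\ltimes(-)$ using $\cK(A\rtimes\G\ltimes\cE)\cong A\rtimes\G\ltimes\cK(\cE)$, but compatibility with the Kasparov product is then less transparent than via the universal property). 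That $F$ is a functor on $*$-homomorphisms is immediate from the algebraic description: an equivariant $\psi\colon B_1\to B_2$ maps the algebraic core $B_{1,0}$ into $B_{2,0}$, so $\id_{A_0}\atimes\id_{c_c(\dG)}\atimes\psi$ is a $*$-homomorphism between the dense $*$-subalgebras defining $A\rtimes\G\ltimes B_1$ and $A\rtimes\G\ltimes B_2$ (the defining relations involve $B$ only through its product and the comodule structure $\triangleleft$, both intertwined by $\psi$), which extends to the universal completions; compatibility with composition, identities and direct sums is clear from the same description.

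Next, homotopy invariance and the natural isomorphism $F\circ S\cong S\circ F$ both follow from the natural isomorphism
\[A\rtimes\G\ltimes\bigl(C_0(X)\otimes B\bigr)\;\cong\;C_0(X)\otimes\bigl(A\rtimes\G\ltimes B\bigr),\]
valid for every locally compact Hausdorff space $X$, with $C_0(X)$ carrying the trivial $\G$-action: a covariant representation of $(A,C_0(X)\otimes B,\G)$ is precisely a covariant representation of $(A,B,\G)$ together with a commuting representation of $C_0(X)$ (since $C_0(X)$ sits centrally and is fixed by the action), and comparing the universal objects — using nuclearity of $C_0(X)$ — gives the isomorphism, naturally in both $B$ and $X$. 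Taking $X=[0,1]$ yields homotopy invariance (an equivariant homotopy $\Psi\colon B_1\to C[0,1]\otimes B_2$ produces $F(\Psi)\colon F(B_1)\to C[0,1]\otimes F(B_2)$ with the correct endpoints) and $X=\mathbb{R}$ yields $F\circ S\cong S\circ F$. For $C^*$-stability one applies the Morita-equivalence lemma above to the $\G$-equivariant Hilbert module $\cE=B\otimes H$, with $H$ a separable Hilbert space with trivial action: $F(B\otimes\cK(H))=A\rtimes\G\ltimes\cK(\cE)$ is Morita equivalent to $A\rtimes\G\ltimes B$, and one checks that the implementing imprimitivity bimodule is $A\rtimes\G\ltimes\cE$ and that $F$ of the corner embedding $B\hookrightarrow B\otimes\cK(H)$ is the standard stabilisation isomorphism. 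Finally, exactness (in particular split-exactness) is exactly the content of the exactness lemma above.

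It remains to see that the induced functor $A\rtimes\G\ltimes(-)\colon KK^{\G}\to KK$ is triangulated. It commutes with suspension by the previous paragraph, so it suffices to check that it sends the mapping cone triangle $SB_2\to C_\psi\to B_1\xrightarrow{\psi}B_2$ of an equivariant $\psi$ to a distinguished triangle. Applying $F$ to the extension $0\to SB_2\to C_\psi\to B_1\to 0$ and using exactness yields an extension $0\to SF(B_2)\to F(C_\psi)\to F(B_1)\to 0$, after identifying $F(SB_2)=SF(B_2)$. Writing $C_\psi$ as the pullback of $\psi$ and the evaluation $C_0((0,1],B_2)\xrightarrow{{\rm ev}_1}B_2$, and using that $F$ intertwines this evaluation with $C_0((0,1],F(B_2))\xrightarrow{{\rm ev}_1}F(B_2)$ (the isomorphism of the previous paragraph being natural in the space variable), the universal property of the pullback produces a natural $*$-homomorphism $F(C_\psi)\to C_{F(\psi)}$ which is the identity on the ideal $SF(B_2)$ and on the quotient $F(B_1)$; a morphism of extensions which is the identity on the sub- and the quotient-algebra is automatically a bijection, hence a $C^*$-algebra isomorphism, and under it $F$ of the mapping cone triangle of $\psi$ becomes the mapping cone triangle of $F(\psi)$. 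Therefore $F$ preserves distinguished triangles, so the induced functor is triangulated.

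The step I expect to be the main obstacle is the tensor-factorisation isomorphism of the second paragraph: identifying the universal C*-completion defining $A\rtimes\G\ltimes(C_0(X)\otimes B)$ with $C_0(X)\otimes(A\rtimes\G\ltimes B)$ is the one place where one must use the actual structure of the universal crossed product (through its covariant representations) rather than formal properties of $F$; the remaining mildly technical point is the identification in the $C^*$-stability step of the imprimitivity bimodule with $A\rtimes\G\ltimes\cE$ and of $F$ of the corner embedding with the stabilisation isomorphism. Once these are in place, together with the exactness and Morita-equivalence lemmas, the rest is formal.
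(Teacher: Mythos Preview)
Your argument is correct and follows the same route as the paper: verify that $A\rtimes\G\ltimes(-)$ is functorial, exact, homotopy invariant and $C^*$-stable, and then invoke the universal property of equivariant $KK$-theory to get a triangulated functor $KK^{\G}\to KK$. The paper compresses all of this into a one-line appeal to \cite[Theorem~4.4]{NestVoigt} together with the exactness and Morita lemmas; your write-up simply unpacks what that citation is doing.

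Two small points worth tightening. First, the universal property you need (for a locally compact, in particular compact, \emph{quantum} group action) is in \cite{NestVoigt}, not in \cite{MeyerNest3}; the latter is the abstract triangulated-category paper and does not supply the quantum-group version of Higson's characterisation. Second, in the $C^*$-stability step you restrict to $\cE=B\otimes H$ with $H$ carrying the \emph{trivial} $\G$-action. For the equivariant universal property one needs stability with respect to $\G$-Hilbert spaces (in particular the corner embedding $B\hookrightarrow B\otimes K(\cH_\G)$ with the regular representation on $\cH_\G$ must become invertible). This is exactly what the Morita lemma gives in full generality --- take $\cE=\cH\otimes B$ for any $\G$-Hilbert space $\cH$ --- so the fix is immediate, but as written your verification of stability is not quite sufficient for the universal property to apply.
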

	\begin{proof}
	This is a direct consequence of \cite[Theorem 4.4]{NestVoigt} and the last two lemmas.
	\end{proof}

	\section{Adjunction}
	In this section we develop a construction which will allow us to establish a (natural) isomorphism of certain  equivariant and non-equivariant $KK$-groups.

	Let $D$ be a finite dimensional C*-algebra with a left $\G$-action, where $\G$ is again a compact quantum group (with $\GGamma$ its discrete dual). The opposite algebra $D^\op$ admits a right $\G$-action:
	\[\alpha^\op \colon D^\op \to D^\op \otimes C(\G) \colon x^\op \mapsto (\id \otimes R) \circ \sigma \circ \alpha(x)^\op.\]
	Our goal is to show the functor $D^\op \rtimes \G \ltimes \cdot \colon KK^{\G} \to KK$, developed in the previous section, is the right adjoint functor of $D \otimes \cdot \colon KK \to KK^{\G}$. This is done by constructing the counit and unit.

	From now on, we fix a faithful $\G$-invariant state $\varphi_D = {\rm Tr}(\rho {\cdot})$ on $D$, where ${\rm Tr}$ is the trace  taking value 1 at each minimal projection and $\rho \in D$ is the `density matrix' of $\varphi_D$.

	\subsection{Unit}
	The modular group of $\varphi_D$ is given by the formula $\sigma^{\varphi_D}_t(a) = \rho^{it} a \rho^{-it}$, $a \in D, t \in \mathbb{R}$. By \cite[Th\'{e}or\`{e}me 2.9]{Enock}, we get
	\[(\rho^{it} a \rho^{-it}) \triangleleft x = \rho^{it} (a \triangleleft \dtau_t(x)) \rho^{-it}, \;\; a \in D, t \in \mathbb{R},\]
	where $\dtau$ is the scaling group on $c_0(\dG)$. Let $p^0$ be the support of the counit on $c_0(\dG)$.
	\begin{lem}
	Let $D$ be a finite dimensional C*-algebra with a left $\G$-action.
	There exists $X \in D^\op \otimes D \subset M(D^\op \rtimes \G \ltimes D)$ such that
	\begin{enumerate}
	\item $X$ is positive;
	\item $a X = (\rho^{-1/2} a \rho^{1/2})^\op X$ and $X b = X(\rho^{1/2} b \rho^{-1/2})^\op$ for all $a,b \in D$;
	\item $p^0 X = X p^0$;
	\item $(\varphi_D^\op \otimes \id)(X) = 1$.
	\end{enumerate}
	\end{lem}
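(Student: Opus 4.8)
We propose to write $X$ down explicitly: then (1), (2) and (4) become routine finite‑dimensional computations and (3) is the only substantial point. Fix a system of matrix units $\{e_{ij}\}$ of $D$ (block by block) in which the density matrix is diagonal, $\rho=\sum_i\rho_i e_{ii}$, $\rho_i>0$ (only positivity of $\rho$ is used). Take
\[ X\ :=\ \sum_{i,j}\sqrt{\rho_i\rho_j}\;e_{ij}^{\op}\otimes e_{ji}\ =\ (\rho^{1/2})^{\op}\Bigl(\sum_{i,j}e_{ij}^{\op}\otimes e_{ji}\Bigr)\rho^{1/2}\ \in\ D^{\op}\otimes D, \]
regarded inside $M(D^{\op}\rtimes\G\ltimes D)$ through the commuting nondegenerate embeddings of $D^{\op}$ and $D$ from the previous section. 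Under the canonical $^*$‑isomorphism $D^{\op}\otimes D\cong B(L^2(D,\varphi_D))$ (first leg acting by $\rho_D$, second by $\lambda_D$), this $X$ is a positive multiple of the rank‑one projection onto the cyclic vector $\Omega_D$; this gives (1). (The precise power of $\rho$ in the coefficient is pinned down by requiring (2) and (4); for any admissible choice $X$ is still a positive multiple of $|\zeta\rangle\langle\zeta|$ for a suitable $\zeta\in L^2(D)$, so (1) is unaffected.)

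Conditions (2) and (4) are finite‑dimensional bookkeeping. For (2) one multiplies $X$ on the left, resp.\ right, by $a\in D\subset M(D^{\op}\rtimes\G\ltimes D)$ and carries the scalars across using $\rho^{1/2}e_{ij}=\rho_i^{1/2}e_{ij}$, $e_{ij}\rho^{1/2}=\rho_j^{1/2}e_{ij}$, the identity $e_{kl}^{\op}e_{ij}^{\op}=\delta_{jk}e_{il}^{\op}$, and the commutation of the embedded copies of $D$ and $D^{\op}$; after rearranging, this reproduces exactly left, resp.\ right, multiplication by $(\rho^{-1/2}a\rho^{1/2})^{\op}$, resp.\ $(\rho^{1/2}a\rho^{-1/2})^{\op}$. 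For (4) one slices the first tensor leg by $\varphi_D^{\op}$, uses $\varphi_D^{\op}(e_{ij}^{\op})=\varphi_D(e_{ij})$, and collapses the double sum to its diagonal; the coefficients of $X$ are arranged so that the result is $1$.

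Condition (3), $p^0X=Xp^0$, is where $\dG$ genuinely enters, and it is the main obstacle: $c_0(\dG)$ does not commute with $D$ or $D^{\op}$ inside the crossed product. The plan is to exploit the relation displayed just before the statement, $(\rho^{it}a\rho^{-it})\triangleleft x=\rho^{it}\bigl(a\triangleleft\dtau_t(x)\bigr)\rho^{-it}$ for $a\in D$, $t\in\mathbb R$, together with its analogue for $(D^{\op},\alpha^{\op})$: read inside $M(D^{\op}\rtimes\G\ltimes D)$, these identities say that conjugation by the one‑parameter unitary groups $t\mapsto\rho^{it}\in D$ and $t\mapsto(\rho^{it})^{\op}\in D^{\op}$ implements, respectively, the scaling automorphism $\dtau_t$ and $\dtau_{-t}$ on the embedded copy of $c_0(\dG)$, while fixing the complementary factor. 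Since $p^0$ is the central support of the counit — the unit of the trivial‑representation block — it is $\dtau$‑invariant, hence fixed by these conjugations; because $D$ is finite‑dimensional the corresponding one‑parameter groups extend to entire families, so one may pass to the imaginary value $t=-i/2$, and combining the resulting relations between $\rho^{1/2}$, $(\rho^{1/2})^{\op}$ and $p^0$ with the factorisation $X=(\rho^{1/2})^{\op}\bigl(\sum_{i,j}e_{ij}^{\op}\otimes e_{ji}\bigr)\rho^{1/2}$ yields (3). The real work is in making this precise: unwinding how $D$ and $D^{\op}$ move past $c_0(\dG)$ in the definition of $D^{\op}\rtimes\G\ltimes D$, verifying that the scaling group is implemented as claimed, and justifying the analytic continuation in the modular parameter; everything else is routine.
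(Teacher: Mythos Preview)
Your explicit candidate is off by a sign in the exponent: the paper takes
\[
X=\sum_{\pi,i,j}(\rho^{-1/2}e^\pi_{ij}\rho^{-1/2})^{\op}\,e^\pi_{ji},
\]
and with the $+1/2$ powers you wrote one computes $(\varphi_D^{\op}\otimes\id)(X)=\rho^2$, not $1$. You flagged this possibility yourself, and (1), (2), (4) really are routine once the exponent is fixed, so this is minor. (One small side remark: your ``$D^{\op}\otimes D\cong B(L^2(D))$'' is only a $*$-isomorphism when $D$ is simple; in general $D^{\op}\otimes D$ has several simple summands, so the ``rank-one projection'' description of $X$ is not literally correct. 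The paper gets positivity instead by noting $\sum_{i,j}e_{ij}\otimes e_{ij}\ge 0$ and applying the transpose in the first leg followed by conjugation by $(\rho^{-1/2})^{\op}$.)

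The genuine gap is in (3). Your plan rests on the assertion that, inside $M(D^{\op}\rtimes\G\ltimes D)$, conjugation by $\rho^{it}\in D$ (respectively $(\rho^{it})^{\op}\in D^{\op}$) implements $\hat\tau_t$ (respectively $\hat\tau_{-t}$) on the embedded copy of $c_0(\dG)$. This is not true: the commutation relation in the crossed product is $b\,x=x_{(1)}\,(b\triangleleft x_{(2)})$ for $b\in D$, so $\rho^{it}x\rho^{-it}$ lands in $c_c(\dG)\cdot D$, not in $c_c(\dG)$. The Enock identity $(\rho^{it}a\rho^{-it})\triangleleft x=\rho^{it}\bigl(a\triangleleft\hat\tau_t(x)\bigr)\rho^{-it}$ is a statement about the \emph{action} on $D$, not about an inner implementation of $\hat\tau$ inside the crossed product. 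Even if one grants that $p^0$ commutes with $\rho^{1/2}$ and $(\rho^{1/2})^{\op}$, your factorisation reduces (3) to the claim that $p^0$ commutes with $\sum_{i,j}e_{ij}^{\op}\otimes e_{ji}$, which is the very statement in question for a different power of $\rho$; nothing has been gained.

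What is actually needed, and what the paper supplies, is a $\G$-invariance fact: the element $\sum_{\pi,i,j}\rho^{-1}e^\pi_{ij}\otimes e^\pi_{ji}\in D\otimes D$ is fixed by the diagonal $\G$-action, because under the equivariant linear isomorphism $\iota\colon D\otimes D\to\End(D)$, $\iota(a\otimes b)(d)=\varphi_D(db)a$, it corresponds to $\id_D$. The scaling relation is used only as a bookkeeping device, to pass from $((\rho^{-1/2}e_{ij}\rho^{-1/2})\triangleleft\hat\tau_{-i/2}(x_{(2)}))^{\op}$ to $(\rho^{1/2}((\rho^{-1}e_{ij})\triangleleft x_{(2)})\rho^{-1/2})^{\op}$; the invariance then yields
\[
p^0 X x \;=\; p^0\sum_{\pi,i,j}\bigl(\rho^{1/2}((\rho^{-1}e^\pi_{ij})\triangleleft x_{(2)})\rho^{-1/2}\bigr)^{\op}\,(e^\pi_{ji}\triangleleft x_{(1)})\;=\;\hat\varepsilon(x)\,p^0 X,
\]
whence $p^0Xp^0=p^0X$ and, by self-adjointness of $X$, $Xp^0=p^0X$. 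Your outline never invokes this invariance, and without it the argument for (3) does not go through.
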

	\begin{proof}
	Since $D$ is finite dimensional, we write $D$ as a direct sum of matrix algebras
	\[D = \bigoplus_{\pi} M_{n(\pi)}\]
	and fix a matrix unit $(e^\pi_{ij})$ for each matrix algebra.
	Let
	\[X = \sum_{\pi,i,j} (\rho^{-1/2}  e^\pi_{ij} \rho^{-1/2})^\op e^\pi_{ji} \in D^\op \otimes D \subset M(D^\op \rtimes \G \ltimes D).\]

	The assertions (2) and (4) follow from a straightforward computation.

	For (1), recall that $\sum_{\pi,i,j} e^\pi_{ij} \otimes e^\pi_{ij} \in D \otimes D$ is positive. Now using the component-wise transpose map viewed as the an isomorphism ${\rm t} \colon D \to D^\op \colon e^\pi_{ij} \mapsto (e^\pi_{ji})^{\rm op}$, we conclude
	\[X = ((\rho^{-1/2})^\op \otimes 1) ({\rm t} \otimes \id)\left(\sum_{\pi,i,j} e^\pi_{ij} \otimes e^\pi_{ij} \right) ((\rho^{-1/2})^\op \otimes 1)\]
	is still positive.

	For (3), we define a vector space isomorphism $\iota \colon D \otimes D \to \End(D)$ by
	\[\iota(a \otimes b)(d) = \varphi_D(db) a.\]
	This is equivariant with respect to the tensor representation of $\G$ on $D \otimes D$ and the adjoint representation of $\G$ on $\End(D)$. Hence $\iota^{-1}(1_{\End(D)}) = \sum_{\pi,i,j} \rho^{-1} e^\pi_{ij} \otimes e^\pi_{ji}$ is invariant under the tensor representation.

	Now we observe that for all $x \in c_c(\GGamma)$
	\begin{align*}
	p^0 X x & = p^0 \sum_{\pi,i,j} \left((\rho^{-1/2} e^\pi_{ij} \rho^{-1/2}) \triangleleft \dtau_{-i/2}(x_{(2)})\right)^\op \left(e^\pi_{ij} \triangleleft x_{(1)}\right)\\
	& = p^0 \sum_{\pi,i,j} \left(\rho^{1/2}(\rho^{-1}e^\pi_{ij}) \triangleleft x_{(2)} \rho^{-1/2} \right)^\op \left(e^\pi_{ij} \triangleleft x_{(1)} \right) \\
	& = p^0 \de(x) X.
	\end{align*}
	In particular $p^0 X p^0 = p^0 X$. Since $X$ is self-adjoint, we get $ X p^0 = p^0 X p^0 = p^0 X$.
	\end{proof}
	Thanks to the lemma above, the vector space $D$ admits a right Hilbert $D^\op \rtimes \G \ltimes D$-module structure defined as follows:
	\begin{itemize}
	\item 	The right module structure is given by $d \triangleleft (a^\op x b) = ((\rho^{-1/2}a \rho^{1/2}d) \triangleleft x) b$ for $a,b,d\in D$, $x \in c_c(\dG)$.
	\item The $D^\op \rtimes \G \ltimes D$-valued inner product is given by
	\[(d,d') = d^* X p^0 d',\;\;\; d, d' \in D.\]
	\end{itemize}
	We denote $D$ equipped with the right Hilbert $D^\op \rtimes \G \ltimes D$-module structure above by $\cD$.
	\subsection{Counit}
	Suppose now that we also have a C*-algebra $B$ equipped with a right action $\beta$ of $\G$, with the algebraic core $B_0$.
	Recall that we have a conditional expectation $D^\op \rtimes \G \ltimes B \to \G \ltimes B$ as in Lemma \ref{lem ce}.
	Composing $D^\op \rtimes \G \ltimes B \to \G \ltimes B$ with the natural operator-valued weight from $ \G \ltimes B$ to $B$, we get an operator-valued weight $E \colon D^\op \rtimes \G \ltimes B \to B$. On $D^\op \rtimes_{\rm alg} \G \ltimes_{\rm alg} B_0$ it is given by
	\[E \colon D^\op \rtimes \G \ltimes B \to B \colon a x b \mapsto \varphi_D^\op(a) \dphi(x) b,\]
	with $\dphi$ denoting  the right Haar weight of $\GGamma$.
	The corresponding GNS module is isomorphic to $\cE_B := L^2(D) \otimes L^2(\G) \otimes B$ with the GNS map
	\[\Lambda \colon D^\op \rtimes_{\rm alg} \G \ltimes_{\rm alg} B \to L^2(D) \otimes L^2(\G) \otimes B \colon d^\op b x \mapsto d \Omega \otimes \Lambda_{\dphi}(x)  \otimes b\]
	for $d \in D, x \in c_c(\dG), b \in B$.
	Furthermore $\cE_B$ carries a natural $\G$-equivariant $D \otimes (D^\op \rtimes \G \ltimes B)$-$B$-bimodule structure defined as follows:

	\begin{itemize}
	\item The $\G$-action $\beta_{\cE_B}$ on $\cE_B$ is given by
	\[\beta_{\cE_B}(x) = U_{12}^* V_{13}^* (\id \otimes \id \otimes \beta)(x), \;\;\; x \in \cE_B.\]
	\item The left action of $D \otimes (D^\op \rtimes \G \ltimes B)$ structure is given by
	\begin{enumerate}
	\item $D \otimes 1 \ni d \otimes 1 \mapsto \lambda_D(d) \otimes 1 \otimes 1$
	\item $1 \otimes D^\op \ni 1 \otimes d^\op \mapsto (\rho_D \otimes \rho)\alpha^\op(d^\op) \otimes 1$,
	\item $1 \otimes c_c(\dG) \ni 1 \otimes x \mapsto 1 \otimes \dlambda(x) \otimes 1$,
	\item $1 \otimes B_0 \ni 1 \otimes b \mapsto 1 \otimes (\lambda \otimes \id)\beta(b)$.
	\end{enumerate}
	\end{itemize}
	From this presentation, it follows that the image of $D \otimes (D^\op \rtimes \G \ltimes B)$ is in $K(\cE_B)$.	
	\subsection{Adjunction}
	We begin by stating a general lemma.
	
	\begin{lem}
	Let $A,B,C$ be C*-algebras, $\cE$ a right Hilbert $A$-module, $\cF$ a Hilbert $A \otimes B$-$C$-bimodule.
	Then we have a natural isomorphism
	\[(\cE \otimes B) \otimes_{A \otimes B} \cF \simeq \cE \otimes_A \cF.\]
	\end{lem}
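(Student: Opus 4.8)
The plan is to construct the isomorphism explicitly on elementary tensors and then check that it is an isometric surjection of Hilbert $C$-modules. Regard $\cF$ as a right Hilbert $C$-module carrying a (nondegenerate) left action $\pi$ of $A\otimes B$; let $A$ act on $\cF$ through $a\mapsto a\otimes 1\in M(A\otimes B)$ and let $b\in B$ act through $\pi(1\otimes b)$, so that both $\cE\otimes_A\cF$ and the operators $\pi(1\otimes b)$ make sense, and $1\otimes b$ commutes with $a\otimes 1$. On the algebraic interior tensor product I would define
\[
\Phi\colon (\cE\otimes B)\otimes_{A\otimes B}\cF \longrightarrow \cE\otimes_A\cF,\qquad
(\xi\otimes b)\otimes\eta\ \longmapsto\ \xi\otimes\bigl(\pi(1\otimes b)\eta\bigr).
\]

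First I would check that $\Phi$ is compatible with the $A\otimes B$-balancing: for $a\in A$ and $b,b'\in B$, the element $(\xi a\otimes bb')\otimes\eta$ is sent to $\xi a\otimes\pi(1\otimes bb')\eta=\xi\otimes\pi(a\otimes bb')\eta$ (using the $A$-balancing in $\cE\otimes_A\cF$), which is exactly the image of $(\xi\otimes b)\otimes\pi(a\otimes b')\eta$; since relations of this type span the kernel defining the interior tensor product, $\Phi$ is a well-defined linear map. Next, expanding the inner product of the exterior tensor product $\cE\otimes B$ and the inner products of interior tensor products, one computes
\[
\bigl\langle (\xi\otimes b)\otimes\eta,\ (\xi'\otimes b')\otimes\eta'\bigr\rangle
= \bigl\langle \eta,\ \pi(\langle\xi,\xi'\rangle_\cE\otimes b^*b')\,\eta'\bigr\rangle,
\]
and the same value comes out of $\bigl\langle\Phi((\xi\otimes b)\otimes\eta),\,\Phi((\xi'\otimes b')\otimes\eta')\bigr\rangle$ once one uses that $\langle\xi,\xi'\rangle_\cE\otimes 1$ commutes with $1\otimes b^*$ in $M(A\otimes B)$. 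Hence $\Phi$ is isometric and extends to an isometry of the completed Hilbert $C$-modules.

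Finally I would show that the range of $\Phi$ is dense, which promotes it to a unitary. Given $\xi\in\cE$ and $\eta\in\cF$, pick an approximate unit $(b_j)$ of $B$; then $(1\otimes b_j)_j$ is a bounded left approximate unit for $A\otimes B$, so by nondegeneracy $\pi(1\otimes b_j)\eta\to\eta$, whence $\Phi\bigl((\xi\otimes b_j)\otimes\eta\bigr)=\xi\otimes\pi(1\otimes b_j)\eta\to\xi\otimes\eta$. Since such elementary tensors span a dense subspace of $\cE\otimes_A\cF$, $\Phi$ is surjective. Its explicit formula makes the resulting unitary visibly natural in $\cE$ and $\cF$, and $B$-linear for the evident left $B$-actions (via $b\mapsto 1\otimes b$ on $\cE\otimes B$ on one side, and via $\pi(1\otimes b)$ on $\cF$ on the other) — which is the form in which the lemma is used in the adjunction argument.

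I expect the only mildly delicate point to be the density of the range, which is precisely where nondegeneracy of the module actions is needed (equivalently, one works with essential submodules); everything else is a routine unwinding of the definitions of exterior and interior tensor products of Hilbert modules. As a sanity check, the statement also follows from associativity of the interior tensor product combined with the identification $(A\otimes B)\otimes_{A\otimes B}\cF\cong\cF$, realizing $\cE\otimes B$ as $\cE\otimes_A(A\otimes B)$.
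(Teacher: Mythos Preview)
Your proof is correct and is precisely the ``direct computation'' the paper invokes without details: the paper's entire proof reads ``Direct computation.'' You have simply spelled out the explicit isomorphism, verified the inner-product identity, and handled surjectivity via nondegeneracy---all of which is the routine content implicit in that phrase.
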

\begin{proof}
Direct computation.
\end{proof}

	\begin{lem}\label{adj1}
	There exists a unitary
	\[U \colon \cD \otimes_{D^\op \rtimes \G \ltimes D} \cE_D \to D\]
	defined by
	\[U(d \otimes_{D^\op \rtimes \G \ltimes D}  \Lambda(x)) = \rho (d \triangleleft x)\]
	for $d \in D$, $x \in D^\op \rtimes \G \ltimes D$.
	\end{lem}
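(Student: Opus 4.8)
Write $C=D^\op\rtimes\G\ltimes D$ and $C_0=D^\op\rtimes_{\rm alg}\G\ltimes_{\rm alg}D$. The plan is to show that the displayed formula defines a linear map on a dense subspace of $\cD\otimes_C\cE_D$, that this map preserves the $D$‑valued inner products, and that its range is all of $D$; since an inner‑product‑preserving surjection between right Hilbert $D$‑modules is automatically $D$‑linear and unitary, these three points finish the proof.

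For \emph{well‑definedness}, first note that under the vector‑space identification $C_0\cong D^\op\atimes c_c(\dG)\atimes D$ the GNS map $\Lambda|_{C_0}$ is the algebraic tensor product of the GNS maps of the faithful functionals $\varphi_D^\op$ and $\dphi$ with the identity on $D$, hence is injective, and $\Lambda(C_0)$ is dense in $\cE_D$. Because $\cE_D$ is the GNS module of the operator‑valued weight $E$, the left $C$‑action on $\cE_D$ restricts on $\Lambda(C_0)$ to left multiplication, $c'\cdot\Lambda(c)=\Lambda(c'c)$. Hence $(d,\Lambda(c))\mapsto d\triangleleft c\in\cD$ is a well‑defined bilinear map that is balanced over $C$ (by associativity of the module action, $(d\triangleleft c')\triangleleft c=d\triangleleft(c'c)$), and composing with $\cD\ni e\mapsto\rho e\in D$ yields a well‑defined linear map $U$ on the algebraic balanced tensor product, namely $U(d\otimes\Lambda(c))=\rho(d\triangleleft c)$. (Right $D$‑linearity of $U$ is then also immediate: the $D$‑slot of $C$ acts on $\cD$ by $e\triangleleft b=(\varepsilon\otimes\id)\alpha(e)\,b=eb$ and on $\cE_D$ by $\Lambda(c)\triangleleft b=\Lambda(cb)$.)

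The main step is \emph{isometry}. Using the GNS inner product $\langle\Lambda(u),\Lambda(v)\rangle_{\cE_D}=E(u^*v)$, the relation $c''\cdot\Lambda(v)=\Lambda(c''v)$, and the inner product $(d,e)=d^*Xp^0e$ of $\cD$, one gets for $d,e\in D$ and $c,c'\in C_0$
\[\langle d\otimes\Lambda(c),\,e\otimes\Lambda(c')\rangle_D=\big\langle\Lambda(c),\,(d,e)\cdot\Lambda(c')\big\rangle_{\cE_D}=E\big(c^*\,d^*Xp^0e\,c'\big),\]
while $\langle U(d\otimes\Lambda(c)),U(e\otimes\Lambda(c'))\rangle_D=(d\triangleleft c)^*\rho^2(e\triangleleft c')$. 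So it suffices to verify the identity $E\big(c^*\,d^*Xp^0e\,c'\big)=(d\triangleleft c)^*\rho^2(e\triangleleft c')$. This can be checked directly, or reduced, via the standard factorization of GNS modules through interior tensor products (which realizes $\cD\otimes_C\cE_D$ as the $D$‑module completion of $\cD$ for the inner product $\langle d,e\rangle_E:=E((d,e))$), to the ``scalar'' case $E(d^*Xp^0e)=d^*\rho^2e$. To prove the latter one expands $d^*Xp^0e$ using the intertwining relations $a''X=(\rho^{-1/2}a''\rho^{1/2})^\op X$, $Xb''=X(\rho^{1/2}b''\rho^{-1/2})^\op$, $p^0X=Xp^0$ of the Unit Lemma and the explicit form of $X$, rewrites the result inside $C_0$ using the comodule relations of $C$, applies $E(a''^\op z''b'')=\varphi_D^\op(a'')\dphi(z'')b''$, and finally uses the identity $(\rho^{it}a''\rho^{-it})\triangleleft x=\rho^{it}(a''\triangleleft\dtau_t(x))\rho^{-it}$ linking the modular group of $\varphi_D$ to the scaling group $\dtau$, the right invariance of the Haar weight $\dphi$, and the normalisation $(\varphi_D^\op\otimes\id)(X)=1$; the two sides then match.

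For \emph{surjectivity}, since $D$ is finite dimensional $\alpha(D)$ lies in $V\atimes D$ for some finite‑dimensional $V\subseteq\Pol(\G)$, so by non‑degeneracy of the pairing we may pick $z_0\in c_c(\dG)$ pairing as the counit $\varepsilon$ of $\Pol(\G)$ on $V$; then $d\triangleleft(1^\op z_0 1)=d\triangleleft z_0=(\varepsilon\otimes\id)\alpha(d)=d$ for all $d\in D$, whence $U(d\otimes\Lambda(1^\op z_0 1))=\rho d$, and these exhaust $D$ because $\rho$ is invertible. Granting the isometry identity, $U$ is norm‑isometric on a dense subspace, extends to an isometry $\cD\otimes_C\cE_D\to D$ with full range, and is therefore a unitary of right Hilbert $D$‑modules with the asserted formula. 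I expect the only real obstacle to be the isometry identity (equivalently $E(d^*Xp^0e)=d^*\rho^2e$): it is a pure computation, but a delicate one, since one must track carefully which copy of $D$ — namely $D$ or $D^\op$ — each symbol occupies inside $M(C)$, which comodule relations are used to commute factors past one another, and how the modular twist present in the left action on $\cE_D$ (hence the scaling group $\dtau$) produces the factor $\rho$.
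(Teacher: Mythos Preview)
Your strategy is correct and coincides with the paper's: both establish the unitary by proving the isometry identity
\[
E\big(c^*\,d^*Xp^0e\,c'\big)=(d\triangleleft c)^*\rho^2(e\triangleleft c').
\]
Your reduction to the ``scalar'' case $E(d^*Xp^0e)=d^*\rho^2 e$ via the Hilbert $C$-module axioms $(d,e)c'=(d,e\triangleleft c')$ and $c^*(d,e)=(d\triangleleft c,e)$ is exactly what the paper's first three displayed lines are doing; writing $c=xa^\op b$ and repeatedly using property~(2) of the Unit Lemma, the paper's line~3 is literally $E(\tilde d^*Xp^0\tilde d')$ with $\tilde d=d\triangleleft c$, $\tilde d'=d'\triangleleft c'$, and line~4 is $\tilde d^*\rho^2\tilde d'$. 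So there is no genuine difference in approach, only in presentation: you supply more scaffolding (well-definedness, surjectivity, the GNS-factorisation viewpoint), while the paper writes out the computation for generic $c=xa^\op b$ directly.

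One remark on the final step: the scalar identity is shorter than you anticipate. Using $p^0X=Xp^0$ and property~(2) one rewrites $d^*Xp^0e=(\rho^{-1/2}d^*\rho^{1/2})^\op\,Xp^0\,e$, then expands $X=\sum(\rho^{-1/2}e^\pi_{ij}\rho^{-1/2})^\op e^\pi_{ji}$ and applies $E$; the normalisation $(\varphi_D^\op\otimes\id)(X)=1$ together with $\dphi(p^0)=1$ and $e\triangleleft p^0=e$ yields $d^*\rho^2e$ directly. The modular/scaling relation and right invariance of $\dphi$ that you list are not needed here --- they enter in the companion Lemma~\ref{adj2}, where one must handle elements of $c_c(\dG)$ nontrivially, but in the present lemma the only $c_c(\dG)$-contribution is the innocuous $p^0$. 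So your outline is sound; just be aware that you have overestimated the analytic input required.
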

	\begin{proof}
	We only need to show that $U$ is an isometry.
	This is done by a straightforward computation. Indeed,
	take $x,x' \in c_c(\dG)$, $a^\op, a'^\op \in D^\op$ and $b, b' \in D$. Then
	\begin{align*}
	(d \otimes_{D^\op \rtimes \G \ltimes D}  \Lambda(xa^\op b),d' \otimes_{D^\op \rtimes \G \ltimes D}  \Lambda(x' a'^\op b')) & = E(b^* (a^\op)^* x^* d^* X p^0 d' x' a'^\op b') \\
	& = E(b^* (a^\op)^* (d \triangleleft x)^* X p^0 (d' \triangleleft x') a'^\op b) \\
	& = E(b^* (d \triangleleft x)^* \rho^{1/2} a^* \rho^{-1/2} X p^0 \rho^{-1/2} a' \rho^{1/2} (d' \triangleleft x') b) \\
	& = (d \triangleleft x a^\op b) \rho^2 (d' \triangleleft x' a'^\op b').
	\end{align*}
	\end{proof}
	\begin{lem}\label{adj2}
	There exists a unitary
	\[V \colon \cD \otimes_{D^\op \rtimes \G \ltimes D} D^\op \rtimes \G \ltimes \cE_D \simeq D^\op \rtimes \G \ltimes D\]
	defined by
	\[V(d \otimes_{D^\op \rtimes \G \ltimes D} a^\op x \Lambda(b^\op y c)) = (d \triangleleft a^\op x) b^\op y c\]
	\end{lem}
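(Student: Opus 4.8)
The plan is to mirror the proof of Lemma~\ref{adj1}: I would exhibit $V$ as a $D^\op\rtimes\G\ltimes D$-linear isometry with dense range between the two Hilbert $D^\op\rtimes\G\ltimes D$-modules, so that it automatically extends to a unitary. First, define $V$ by the stated formula on the algebraic dense submodule of $\cD\otimes_{D^\op\rtimes\G\ltimes D}(D^\op\rtimes\G\ltimes\cE_D)$ spanned by the vectors $d\otimes_{D^\op\rtimes\G\ltimes D}a^\op x\,\Lambda(b^\op y c)$ (with $d\in D=\cD$, $a^\op,b^\op\in D^\op$, $x,y\in c_c(\dG)$, $c\in D$); informally this formula absorbs the $\cD$-leg into the $D^\op\rtimes\G$-part of $D^\op\rtimes\G\ltimes\cE_D$ using the right action $\triangleleft$ of $D^\op\rtimes\G\ltimes D$ on $\cD$, and then exploits that $\Lambda$ converts the module structure of $\cE_D$ into multiplication inside $D^\op\rtimes\G\ltimes D$. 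Well-definedness amounts to checking that $V$ respects the balancing relation $dT\otimes\eta\sim d\otimes T\eta$ (for $T\in D^\op\rtimes\G\ltimes D$), the defining relations of the crossed-product module $D^\op\rtimes\G\ltimes\cE_D$, and the GNS relations of $\cE_D$ (vanishing on the kernel of the operator-valued weight $E$); the last of these is subsumed in the isometry computation below, and the first two are routine relation-chasing. Right $D^\op\rtimes\G\ltimes D$-linearity is immediate from the formula, as the right module structure of $D^\op\rtimes\G\ltimes\cE_D$ is modelled on right multiplication in $D^\op\rtimes\G\ltimes D$.

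The heart of the argument is the isometry check. I would compute
$\big(d\otimes a^\op x\,\Lambda(b^\op y c),\; d'\otimes a'^\op x'\,\Lambda(b'^\op y' c')\big)$
by first evaluating the $D^\op\rtimes\G\ltimes D$-valued inner product on $D^\op\rtimes\G\ltimes\cE_D$ — which is assembled from the $D^\op\rtimes\G\ltimes D$-valued inner product on $\cE_D$, itself $E$ applied to products of $\Lambda$'s — then pushing the $\cD$-inner product $(d,d')=d^*Xp^0d'$ through it, and comparing the outcome with the $D^\op\rtimes\G\ltimes D$-valued inner product of $(d\triangleleft a^\op x)b^\op y c$ and $(d'\triangleleft a'^\op x')b'^\op y' c'$. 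This is exactly the manipulation performed in the proof of Lemma~\ref{adj1}, only carried out relative to $D^\op\rtimes\G$ rather than all the way down to $\bc$; the same ingredients enter: the relations $aX=(\rho^{-1/2}a\rho^{1/2})^\op X$, $p^0X=Xp^0$ and $(\varphi_D^\op\otimes\id)(X)=1$ from the lemma on $X$, the identity $p^0Xx=p^0\de(x)X$ established there, the modular identity $(\rho^{it}a\rho^{-it})\triangleleft x=\rho^{it}(a\triangleleft\dtau_t(x))\rho^{-it}$, and the compatibility of the right Haar weight $\dphi$ with $\de$ and $\dtau$. Finally, density of the range is immediate: taking $d=1\in\cD$, $a^\op=1^\op$ and $x=p^0$ gives $d\triangleleft a^\op x=1$, so $V(1\otimes 1^\op p^0\,\Lambda(b^\op y c))=b^\op y c$, and these elements span the algebraic core of $D^\op\rtimes\G\ltimes D$.

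I expect the main obstacle to be bookkeeping rather than any conceptual point: one must correctly propagate the Sweedler-type sums coming from the two multiplications (in $D^\op\rtimes\G\ltimes D$ and in $D^\op\rtimes\G\ltimes\cE_D$), keep the powers of $\rho$, the projection $p^0$, the counit $\de$ and the element $X$ in their correct places, and apply $E$ and $\dphi$ consistently; the combinatorics is the same as in Lemma~\ref{adj1} but with more legs to track. A secondary point requiring attention is that $V$ is a priori defined only on algebraic representatives, so one must confirm independence of the chosen presentation — the non-obvious half being the vanishing on $\ker E$, which, as noted, falls out of the isometry estimate. One could also try to obtain the statement more conceptually from Lemma~\ref{adj1} by applying the Hilbert-module crossed product $D^\op\rtimes\G\ltimes(\cdot)$ and using $\cK(D^\op\rtimes\G\ltimes\cE)\simeq D^\op\rtimes\G\ltimes\cK(\cE)$ together with the associativity isomorphism $(\cE\otimes B)\otimes_{A\otimes B}\cF\simeq\cE\otimes_A\cF$ recorded above, but the direct verification above is the cleaner route and the one I would take.
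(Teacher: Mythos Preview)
Your plan is correct and follows the same overall strategy as the paper: show $V$ is a right $D^\op\rtimes\G\ltimes D$-linear isometry with dense range on the algebraic core. The paper, however, inserts one simplification you do not mention and which substantially shortens the bookkeeping: it first observes that every generator can be rewritten as
\[
d \otimes a^\op x\,\Lambda(b^\op y c) \;=\; 1 \otimes \Lambda(y)\cdot\bigl(\rho^{1/2}(d\triangleleft a^\op x\,\rho^{1/2}b\rho^{-1/2})\rho^{-1/2}\bigr)^\op c,
\]
so by right-linearity it suffices to check $(1\otimes\Lambda(x),\,1\otimes\Lambda(y))=x^*y$ for $x,y\in c_c(\dG)$. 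That single computation is then carried out using the explicit form of $X$, the inner product $(1,1)=Xp^0$, and the Sweedler identity $p^0_{(1)}x\otimes p^0_{(2)}=p^0_{(1)}\otimes p^0_{(2)}\dS(x)$ together with the Haar weight $\dphi$; the antipode $\dS$ enters essentially, not just $\de$ and $\dtau$ as you anticipate. Your direct computation on general elements would also succeed, but the paper's reduction replaces a six-variable inner product with a two-variable one and is worth adopting.
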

	\begin{proof}
	Since
	\[(d \otimes (a^\op x \Lambda(b^\op y c)) = 1 \otimes \Lambda (y) (\rho^{1/2} (d \triangleleft a^\op x \rho^{1/2} b \rho^{-1/2}) \rho^{-1/2})^\op c\]
	for $a,b,c, d \in D$ and $x,y \in c_c(\dG)$,
	we only need to show
	\[(1 \otimes \Lambda(x), 1 \otimes \Lambda(y)) = x^*y.\]
	To this end, we compute, using the antipode of $c_c(\GGamma)$ denoted $\hat{S}$,
	\begin{align*}
	(1 \otimes \Lambda(x), 1 \otimes \Lambda(y)) & = (\Lambda(x),X p^0 \Lambda(y)) \\
	& = \sum_{\pi,i,j} (\Lambda(x), \Lambda(\rho^{1/2} e^\pi_{ji} \rho^{-1/2})^\op  y \dS^{-1}(p^0_{(2)}) (\rho^{1/2} e^\pi_{ij} \rho^{-1/2}) p^0_{(1)}) \\
	& = \sum_{\pi,i,j} \varphi_D(\rho^{1/2} e^\pi_{ij} \rho^{-1/2}) \dphi(x^*y \dS^{-1}(p^0_{(2)})) p^0_{(1)} \\
	& = x^* y.
	\end{align*}
	Here we have used the identity
	\[p^0_{(1)} x \otimes p^0_{(2)} = p^0_{(1)} x_{(1)} \otimes p^0_{(2)} x_{(2)} \dS(x_{(3)}) = p^0_{(1)} \otimes p^0_{(2)} \dS(x).\]
%	\begin{align*}
%	& (d \otimes_{D^\op \rtimes \G \ltimes D} a^\op x \Lambda(b^\op y c), d' \otimes_{D^\op \rtimes \G \ltimes D} a'^\op x' \Lambda(b'^\op y' c')) \\
%	& = ((d \triangleleft a^\op x) \Lambda(y b^\op c), X p^0 (d' \triangleleft a'^\op x')\Lambda(y' b'^\op c'))_{D^\op \rtimes \G \ltimes \cE_D}\\
%	& = (\rho^{1/2} b \rho^{-1/2} (d \triangleleft a^\op x) \Lambda(y c), X p^0 \rho^{1/2} b' \rho^{-1/2} (d' \triangleleft a'^\op x') \Lambda(y' c')) \\
%	& = (\Lambda(yc) \rho^{1/2} b \rho^{-1/2} (d \triangleleft a^\op x),X \Lambda(y'c') \triangleleft(\dS^{-1}(p^0_{(2)})) p^0_{(1)} \rho^{1/2} b' \rho^{-1/2} (d' \triangleleft a'^\op x')
%	\end{align*}
	\end{proof}
	We have two functors
	\[\KK \to \KK^{\G} \colon A \mapsto D \otimes A, \KK^{\G} \to \KK \colon A \mapsto D^\op \rtimes \G \ltimes A.\]
	\begin{thm}\label{thm:adjoint}
	Let $\G$ be a compact quantum group with a countable dual. For $A \in \KK$ and $B \in \KK^{\G}$,
	we have natural isomorphisms
	\[KK^{\G}(D \otimes A, B) \simeq KK(A, D^\op \rtimes \G \ltimes B).\]
	\end{thm}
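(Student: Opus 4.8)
The plan is to realise the claimed isomorphism as the hom-set bijection of an adjunction with $D\otimes\cdot\colon KK\to KK^{\G}$ left adjoint to $D^\op\rtimes\G\ltimes\cdot\colon KK^{\G}\to KK$, built from an explicit unit and counit, and then to run the standard categorical argument. The counit is already available: for $B\in KK^{\G}$ the $\G$-equivariant $D\otimes(D^\op\rtimes\G\ltimes B)$-$B$-bimodule $\cE_B$ from the ``Counit'' subsection is a Kasparov cycle, hence defines $\varepsilon_B\in KK^{\G}(D\otimes(D^\op\rtimes\G\ltimes B),B)$, manifestly natural in $B$ since $\cE_B=L^2(D)\otimes L^2(\G)\otimes B$ with all structure maps assembled from those of $B$. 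For the unit I would first observe that, as the $\G$-action on $D\otimes A$ lives on the $D$-leg, Proposition \ref{prop:functor} yields a natural identification $D^\op\rtimes\G\ltimes(D\otimes A)\cong(D^\op\rtimes\G\ltimes D)\otimes A$; moreover by Proposition \ref{crossed fd}(1) the algebra $D^\op\rtimes\G\ltimes D$ is a direct sum of matrix algebras, so the finite-dimensional right Hilbert $D^\op\rtimes\G\ltimes D$-module $\cD$ from the ``Unit'' subsection is finitely generated projective and defines $[\cD]\in KK(\bc,D^\op\rtimes\G\ltimes D)$. Setting $\eta_A:=[\cD]\otimes\id_A$ gives the unit, whose naturality is immediate from this description together with the functoriality of the crossed product.

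The heart of the matter is the two triangle identities. For $\varepsilon_{D\otimes A}\comp(D\otimes\eta_A)=\id_{D\otimes A}$, I would use the compatibilities $\varepsilon_{D\otimes A}=\varepsilon_D\otimes\id_A$ and $D\otimes\eta_A=\id_D\otimes[\cD]\otimes\id_A$, and then the internal-tensor-product lemma stated just before Lemma \ref{adj1} to cancel the extra $D$-factor; what remains is exactly the assertion $\cD\otimes_{D^\op\rtimes\G\ltimes D}\cE_D\cong D$ of Lemma \ref{adj1}. For $(D^\op\rtimes\G\ltimes\varepsilon_B)\comp\eta_{D^\op\rtimes\G\ltimes B}=\id_{D^\op\rtimes\G\ltimes B}$, unwinding $\eta$ and applying the same lemma identifies the relevant module with $\cD\otimes_{D^\op\rtimes\G\ltimes D}(D^\op\rtimes\G\ltimes\cE_B)$; for $B=D$ this is $D^\op\rtimes\G\ltimes D$ by Lemma \ref{adj2}, and since the computation in the proof of that lemma concerns only the $D^\op$- and $L^2(\G)$-legs, it carries over with $\cE_B$ in place of $\cE_D$, giving the identity for arbitrary $B$.

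Granting the two triangle identities, the maps $f\mapsto(D^\op\rtimes\G\ltimes f)\comp\eta_A$ and $g\mapsto\varepsilon_B\comp(D\otimes g)$ are mutually inverse by the usual computation --- each composite collapses via naturality of $\eta$, respectively $\varepsilon$, and a single triangle identity --- and the resulting bijection $KK^{\G}(D\otimes A,B)\cong KK(A,D^\op\rtimes\G\ltimes B)$ is automatically natural in $A$ and $B$.

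The genuine difficulty lies not in the formal skeleton but in the bookkeeping behind the triangle identities: one has to align several identifications simultaneously --- $D^\op\rtimes\G\ltimes(D\otimes A)\cong(D^\op\rtimes\G\ltimes D)\otimes A$, $\varepsilon_{D\otimes A}\cong\varepsilon_D\otimes\id_A$, $\cE_{D\otimes A}\cong\cE_D\otimes A$, and the internal-tensor-product lemma --- and verify that all bimodule structures, in particular the $\G$-equivariant structures and the $\rho^{\pm1/2}$-twists hard-wired into $\cD$ and into the left action on $\cE_B$, are preserved, so that the unitaries of Lemmas \ref{adj1} and \ref{adj2} really do compute the Kasparov products in question. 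It helps that, after these reductions, the relevant cycles all carry a left action by compact operators, so the zero Fredholm operator is admissible and the products are literally internal tensor products of bimodules; thus no $K$-homological connecting data enters and the identities reduce to the module-level computations already recorded.
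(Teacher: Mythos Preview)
Your proposal is correct and follows essentially the same route as the paper: both set up the unit $\eta_A=[\cD]\otimes 1_A$ and counit $\varepsilon_B=[\cE_B]$, and then verify the two triangle identities by reducing them, via the internal-tensor-product lemma, to the module isomorphisms of Lemmas~\ref{adj1} and~\ref{adj2}. Your additional remarks --- that $D^\op\rtimes\G\ltimes D$ being a direct sum of matrix algebras makes $\cD$ a genuine $KK$-cycle, that the left actions being by compacts forces the Kasparov products to be plain internal tensor products, and that the computation in Lemma~\ref{adj2} only involves the $D^\op$- and $L^2(\G)$-legs so extends from $B=D$ to general $B$ --- are exactly the bookkeeping the paper leaves implicit in its two displayed isomorphisms.
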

	\begin{proof}
	We construct the counit-unit adjunction.
	The unit is given by
	\[\eta_A = [\cD] \otimes 1_A \in KK(A, (D^\op \rtimes \G \ltimes D) \otimes A)\]
	and the counit is given by 
	\[\varepsilon_B = [\cE_B] \in KK^{\G}(D \otimes (D^\op \rtimes \G \ltimes B), B).\]

	We need to prove
	\[\varepsilon_{D \otimes A} (\id_D \otimes \eta_{A}) = \id_{D \otimes A}, (D^\op \rtimes \G \ltimes \varepsilon_B) \eta_{D^\op \rtimes \G \ltimes B} = \id_{D^\op \rtimes \G \ltimes B}.\]
	The first identity is due to Lemma \ref{adj1}:
	\[(D \otimes \cD \otimes A) \otimes_{D \otimes (D^\op \rtimes \G \ltimes D) \otimes A} (\cE_D \otimes A) \simeq D \otimes A.\]
	This follows from $\cD \otimes_{D^\op \rtimes \G \ltimes D} \cE_D \simeq D$.
	The second identity is due to Lemma \ref{adj2}:
	\[(\cD \otimes D^\op \rtimes \G \ltimes B) \otimes_{(D^\op \rtimes \G \ltimes D) \otimes (D^\op \rtimes \G \ltimes B)} (D^\op \rtimes \G \ltimes \cE_B) \simeq D^\op \rtimes \G \ltimes B.\]
	\end{proof}

	\section{Application to UCT}\label{section UCT}
In this section we will apply the last theorem to questions regarding the Universal Coefficient Theorem. Let $\G$ be again a compact quantum group.

A $\G$-C*-algebra is said to be \emph{cofibrant} if it is of the form $D \otimes A$ where $D$ is a finite dimensional C*-algebra with a $\G$-action $\alpha$ and the $\G$-action on $D \otimes A$ is given by $\alpha \otimes \id$. Let ${\rm Cof}$ be the full subcategory of cofibrant objects in $KK^{\G}$ and let $\mathcal{N}$ be the full subcategory of $A \in KK^{\G}$ such that $D^\op \rtimes \G \ltimes A$ is $KK$-contractible for any finite dimensional $\G$-C*-algebra $D$.
	\begin{cor} Suppose that the dual of $\G$ is countable.
	The subcategories $(\langle {\rm Cof} \rangle, \mathcal{N})$ are complementary, i.e., for any $A \in KK^{\G}$, there exists a unique triangle
	\[\xymatrix{& P(A) \ar[ld] & \\ A \ar[rr] && N(A)\;\;\;, \ar[lu]|\circ }\]
	where $P(A) \in \langle {\rm Cof} \rangle$ and $N(A) \in \mathcal{N}$.
	\end{cor}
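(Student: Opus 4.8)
The plan is to obtain the corollary as a direct application of Theorem~\ref{thm:MN}, taken with $\HG$ the trivial quantum group (so that $KK^{\HG}=KK$), applied to the family of functors
\[ F_D := D^{\op} \rtimes \G \ltimes \cdot \;\colon\; KK^{\G} \to KK, \]
indexed by the finite-dimensional $\G$-C*-algebras $D$. Two of the hypotheses of Theorem~\ref{thm:MN} are already available: each $F_D$ sends distinguished triangles to distinguished triangles by Proposition~\ref{prop:functor} (applied to the right $\G$-action $\alpha^{\op}$ on the separable C*-algebra $D^{\op}$), and each $F_D$ admits the left adjoint
\[ F_D^{\perp} = D \otimes \cdot \;\colon\; KK \to KK^{\G}, \qquad KK^{\G}(D \otimes A, B) \simeq KK(A, D^{\op} \rtimes \G \ltimes B), \]
by Theorem~\ref{thm:adjoint}. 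Granting that this indexing family may be taken countable, Theorem~\ref{thm:MN} then produces a complementary pair of localizing subcategories $(\cP,\cN)$, in which $\cP$ is the thick (equivalently, once countable sums are adjoined, localizing) subcategory generated by the objects $F_D^{\perp}(A)=D\otimes A$, and $\cN=\{N\in KK^{\G}:D^{\op}\rtimes\G\ltimes N\text{ is }KK\text{-contractible for all finite-dimensional }\G\text{-C*-algebras }D\}$. By construction the generators of $\cP$ are exactly the cofibrant objects, so $\cP=\langle\mathrm{Cof}\rangle$, and $\cN$ is verbatim the subcategory $\mathcal{N}$ defined above; hence $(\langle\mathrm{Cof}\rangle,\mathcal{N})$ is complementary, and the uniqueness of the triangle $P(A)\to A\to N(A)$ up to isomorphism is the content of the Remark following the definition of a complementary pair.

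The one point that genuinely requires an argument is that the family $\{F_D\}$ can be replaced by a countable subfamily without altering either $\cP$ or $\cN$. The cleanest route is to note that, because $\G$ has countable dual, there are only countably many equivariant isomorphism classes of finite-dimensional $\G$-C*-algebras: the underlying C*-algebra $\bigoplus_i M_{n_i}$ ranges over a countable set, and for each such algebra the $\G$-actions are, up to equivariant isomorphism, governed by the finite-dimensional corepresentation theory of $\G$ (how $\G$ acts on the centre, together with the resulting projective corepresentations on the individual blocks), which is countable. One may then take as the indexing family the whole (now countable) collection of all finite-dimensional $\G$-C*-algebras, and there is nothing further to reconcile: the resulting $\cP$ and $\cN$ are literally $\langle\mathrm{Cof}\rangle$ and $\mathcal{N}$. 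If one prefers to avoid this classification, an alternative is to fix a countable set $\mathcal{D}_0$ of finite-dimensional $\G$-C*-algebras that already generates $\langle\mathrm{Cof}\rangle$ as a localizing subcategory --- for instance the endomorphism algebras $B(H_U)$ with the adjoint action, for $U$ running over a countable family of finite-dimensional unitary corepresentations of $\G$ and their finite direct sums --- and then check, using the adjunction, that $\mathcal{D}_0$ also detects $\mathcal{N}$.

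I expect this countability reduction to be essentially the only substantive step; once a countable family is in place, the corollary is a formal consequence of Theorems~\ref{thm:MN} and~\ref{thm:adjoint} together with Proposition~\ref{prop:functor}, the identifications $\cP=\langle\mathrm{Cof}\rangle$ and $\cN=\mathcal{N}$ being immediate from the definitions.
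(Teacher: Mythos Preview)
Your overall strategy is exactly the paper's: apply Theorem~\ref{thm:MN} with $\HG$ trivial to the family $F_D=D^{\op}\rtimes\G\ltimes\cdot$, using Proposition~\ref{prop:functor} for triangulatedness and Theorem~\ref{thm:adjoint} for the left adjoints $D\otimes\cdot$, and then identify $\cP=\langle\mathrm{Cof}\rangle$ and $\cN=\mathcal{N}$. You also correctly isolate the only nontrivial point, namely that the family can be taken countable, i.e.\ that there are at most countably many equivariant isomorphism classes of finite-dimensional $\G$-C*-algebras.

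The gap is in your justification of that countability. Saying that the actions are ``governed by how $\G$ acts on the centre together with the resulting projective corepresentations on the blocks'' is a description, not a proof: the passage from an action on $M_n$ to a genuine (rather than projective) corepresentation is obstructed by a $2$-cocycle, and there is no a~priori reason the set of relevant cocycle classes (equivalently, of inequivalent ergodic actions on a fixed block) is countable. This is precisely the step the paper does not attempt by hand; instead it invokes two external results: (i) equivariant isomorphism classes of finite-dimensional $\G$-C*-algebras are in bijection with $Q$-systems in $\Rep(\G)$ \cite{YukiKenny}, and (ii) a fixed object of $\Rep(\G)$ carries at most finitely many $Q$-system structures \cite{IzumiKosaki}. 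Since $\Rep(\G)$ has countably many objects (countable dual), countability follows. Your alternative route via a generating family $\mathcal{D}_0\subset\{B(H_U)\}$ is also incomplete as stated: generating $\langle\mathrm{Cof}\rangle$ does not by itself imply that $\mathcal{D}_0$ detects $\mathcal{N}$, because $\mathcal{N}$ is defined by vanishing of \emph{all} $D^{\op}\rtimes\G\ltimes N$, and you would still need to show every finite-dimensional $\G$-C*-algebra lies in the localizing subcategory generated by the $B(H_U)$'s --- which is again the countability/classification question in disguise.

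In short: keep your reduction, but replace the informal countability paragraph by the $Q$-system argument the paper uses.
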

	\begin{proof}
	By the help of Theorem \ref{thm:MN} and Theorem \ref{thm:adjoint}, we only need to show the isomorphism class of finite dimensional $\G$-C*-algebras is at most countable. First from \cite{YukiKenny}, the isomorphism classes of finite dimensional $\G$-C*-algebra are in one-to-one correspondence with the $Q$-systems in ${\rm Rep}(\G)$. There exists only countably many objects in ${\rm Rep}(\G)$ and each of them has at most finitely many structures of a $Q$-system by \cite{IzumiKosaki}.
	\end{proof}

%	Let $\G$ be a compact quantum group.

 Recall now the phantom tower construction.

	For a separable $\G$-C*-algebra $A$, define $P_n, N_n \in KK^{\G}$ inductively as follows:
	\begin{itemize}
	\item Put $A = N_0$.
	\item For $N_n$, we set $P_{n+1} = \bigoplus_{D :\text{torsion}} D \otimes (D^\op \rtimes \G \ltimes N_n)$. Then we have the counit morphism $\bigoplus_D \varepsilon_{N_n} \colon P_{n+1} \to N_n$. We embed this morphism into a triangle $P_{n+1} \to N_n \to N_{n+1} \to SP_{n+1}$ to define $N_{n+1}$.
	\end{itemize}
	With the construction above, we get a diagram:
	\[\xymatrix{& P_1 \ar[ld] & & P_2 \ar[ll]|\circ \ar[ld] && \ar[ll]|\circ \ar[ld] \\ A \ar[rr] && N_1 \ar[lu]|\circ \ar[rr] && N_2 \ar[lu]|\circ \ar[rr] &&& }\]
	Now consider the morphism $A \to N_n$ to fit in a triangle $A \to N_n \to \tilde A_n$.
	The octahedral axiom shows that $\tilde A_n$ also fits to a triangle
	\begin{align}\label{eqn:tri}
	\tilde A_n \to \tilde A_{n+1} \to P_n \to S \tilde A_n.
	\end{align}
	We take the homotopy limit $N = {\rm ho \mathchar`- lim} N_n$ and $\tilde A = {\rm ho \mathchar`- lim} \tilde A_n$.
	Then we have a triangle
	\[\xymatrix{& \tilde A \ar[ld] & \\ A \ar[rr] && N \;\;\;\;,\ar[lu]|\circ }\]
	where $\tilde A \in \langle {\rm Cof} \rangle$ and $N \in \mathcal{N}$.
	
	\begin{thm}\label{thm:UCT}
		Let $\G$ be a compact quantum group with a countable dual and let $A$ be a separable $\G$-C*-algebra. Suppose that either 
\begin{rlist}
\item $D^\op \rtimes \G \ltimes A$ satisfies the UCT for any finite dimensional $\G$-C*-algebra $D$, or
\item $\G$ has no torsion and 	 $\G \ltimes A$ satisfies the UCT. 
\end{rlist}		
 Then $P(A)$ satisfies the UCT.  
	\end{thm}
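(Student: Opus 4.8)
The plan is to run the phantom tower construction recalled just above and track the UCT class along it. The UCT class (the bootstrap class $\mathcal{B}$) is closed under countable direct sums, suspensions, mapping cones, $KK$-equivalence and homotopy limits, so it is a localizing subcategory of $KK$; the whole argument is to show that every $P_n$, hence every $\tilde A_n$, hence the homotopy limit $\tilde A = P(A)$, lands in $\mathcal{B}$.

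First I would establish the base case and the inductive step for the $P_n$'s. By construction $P_{n+1} = \bigoplus_{D\,\textup{torsion}} D \otimes (D^\op \rtimes \G \ltimes N_n)$, and since each $D$ is finite dimensional, $D \otimes (D^\op \rtimes \G \ltimes N_n)$ is $KK$-equivalent (in fact Morita equivalent after tensoring, or simply a finite direct sum of copies) to $D^\op \rtimes \G \ltimes N_n$; in any case $D\otimes(-)$ preserves the UCT class. So it suffices to know $D^\op \rtimes \G \ltimes N_n \in \mathcal{B}$ for all torsion $D$ and all $n$. For $n=0$, $N_0 = A$, and $D^\op \rtimes \G \ltimes A$ satisfies the UCT by hypothesis (i) (and under hypothesis (ii), since $\G$ has no torsion the only relevant $D$ is $\mathbb{C}$, giving $\G \ltimes A$, which satisfies the UCT by assumption). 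For the inductive step I need: if $D^\op \rtimes \G \ltimes N_n$ satisfies the UCT for all torsion $D$, then so does $D^\op \rtimes \G \ltimes N_{n+1}$. Here one applies the triangulated functor $D^\op \rtimes \G \ltimes (-)$ of Proposition \ref{prop:functor} to the defining triangle $P_{n+1} \to N_n \to N_{n+1} \to SP_{n+1}$, obtaining an exact triangle in $KK$; two of its three vertices, namely $D^\op \rtimes \G \ltimes N_n$ and $D^\op \rtimes \G \ltimes P_{n+1}$, are in $\mathcal{B}$ by the inductive hypothesis (for the latter, distribute over the direct sum and use that $D^\op \rtimes \G \ltimes (D'\otimes(D'^\op \rtimes \G \ltimes N_n))$ is, by Proposition \ref{crossed fd}(1) together with the structure of the construction, a crossed product of a type I algebra, and in any event is built from $D'^\op \rtimes \G \ltimes N_n \in \mathcal{B}$ by $KK$-equivalences), so the third, $D^\op \rtimes \G \ltimes N_{n+1}$, is in $\mathcal{B}$ as $\mathcal{B}$ is triangulated.

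The key point making $D^\op \rtimes \G \ltimes P_{n+1}$ tractable is the adjunction of Theorem \ref{thm:adjoint} and the concrete description of the crossed product: $D^\op \rtimes \G \ltimes (D' \otimes C) \cong (D^\op \rtimes \G \ltimes D') \otimes C$ is not literally an identity, but $D^\op \rtimes \G \ltimes D'$ is a finite-dimensional C*-algebra by Proposition \ref{crossed fd}(1), so $D^\op \rtimes \G \ltimes (D'\otimes C)$ is $KK$-equivalent to a finite direct sum of copies of $C = D'^\op \rtimes \G \ltimes N_n$, which lies in $\mathcal{B}$ by induction. Thus every $P_n \in \mathcal{B}$ after applying any $D^\op\rtimes\G\ltimes(-)$, and in particular $P_n$ itself lies in $\mathcal{B}$ (take $D = \mathbb{C}$, or simply note $P_n$ is cofibrant built from UCT pieces). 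Feeding this into the triangles $A \to N_n \to \tilde A_n$ and $\tilde A_n \to \tilde A_{n+1} \to P_n \to S\tilde A_n$ of \eqref{eqn:tri}: $\tilde A_1$ is (up to shift) the cone of $P_1 \to N_0 = A$ — wait, this one involves $A$, which need not be UCT — so instead one tracks $\tilde A_n$ directly via \eqref{eqn:tri}: $\tilde A_1 \simeq SP_0$-type term is built from $P_1 \in \mathcal{B}$, and inductively $\tilde A_{n+1}$ sits in a triangle with $\tilde A_n \in \mathcal{B}$ and $P_n \in \mathcal{B}$, hence $\tilde A_{n+1} \in \mathcal{B}$. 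Finally $P(A) = \tilde A = \operatorname{ho-lim} \tilde A_n$; since a homotopy limit sits in a triangle with $\prod \tilde A_n$ and $\prod \tilde A_n$ (via the $\operatorname{id} - \text{shift}$ map) and the bootstrap class is closed under countable products of its objects and under mapping cones, $P(A) \in \mathcal{B}$, i.e.\ $P(A)$ satisfies the UCT.

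The step I expect to be the main obstacle is the very last one: closure of the UCT class under the relevant homotopy limit. The bootstrap class is manifestly closed under countable direct sums, but the homotopy limit defining $\tilde A$ involves a countable \emph{product}, and closure of $\mathcal{B}$ under countable products is not a triviality — one must either invoke the fact that $\operatorname{ho-lim}$ of a tower of UCT algebras with UCT-compatible maps again satisfies the UCT (this is where one uses a Milnor $\lim^1$ sequence in $KK$-theory, valid because all the algebras are separable and the maps are eventually those in \eqref{eqn:tri}), or argue that $\tilde A$ is $KK$-equivalent to a sequential homotopy colimit / mapping telescope, for which closure is standard. A clean way is to observe that the tower $(\tilde A_n)$ is \emph{constant in $KK$-theory eventually} — no, more honestly: one uses that $\operatorname{ho-lim}$ fits in the triangle $\operatorname{ho-lim}\tilde A_n \to \prod_n \tilde A_n \xrightarrow{\operatorname{id}-s} \prod_n \tilde A_n$, and that for a sequence of \emph{separable} nuclear UCT C*-algebras the countable product lies in a bootstrap-type class closed under the operations at hand; alternatively restrict attention to the image in $KK$ and use that $KK_*(-)$ commutes with these homotopy limits up to $\lim^1$, which vanishes or is controlled here. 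Settling this cleanly — pinning down exactly which closure property of the UCT class under countable homotopy limits one is entitled to and citing it — is the crux; everything before it is a routine induction on the phantom tower.
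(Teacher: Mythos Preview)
Your approach is exactly the paper's: run the phantom tower, show by induction that each $D^\op \rtimes \G \ltimes N_n$ is in the bootstrap class, deduce $P_n$ and then $\tilde A_n$ are, and pass to the (co)limit. Two places where you hedge unnecessarily deserve cleaning up, and the second is the ``obstacle'' you flag.

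First, the factorisation $D^\op \rtimes \G \ltimes (D' \otimes C) \cong (D^\op \rtimes \G \ltimes D') \otimes C$ \emph{is} a literal isomorphism here, because in the cofibrant object $D' \otimes C$ the $\G$-action is $\alpha_{D'}\otimes\id_C$: the factor $C$ carries the trivial action and is a pure spectator in the crossed-product construction. The paper uses this directly and then invokes Proposition~\ref{crossed fd}(1) to say $D^\op \rtimes \G \ltimes D'$ is a finite direct sum of matrix algebras, so $D^\op \rtimes \G \ltimes P_{n+1}$ is a finite matrix amplification of $\bigoplus_{D'} D'^\op \rtimes \G \ltimes N_n$. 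No detour through ``type I'' or Morita arguments is needed at this step.

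Second, and more importantly, the ``ho-lim'' in the phantom tower is a homotopy \emph{colimit} along the maps $\tilde A_n \to \tilde A_{n+1}$ from \eqref{eqn:tri}; it sits in a triangle
\[
\bigoplus_n \tilde A_n \xrightarrow{\ \id - s\ } \bigoplus_n \tilde A_n \longrightarrow \tilde A,
\]
with \emph{direct sums}, not products. Closure of the bootstrap class under countable direct sums and mapping cones is standard, so the step you identify as the crux is immediate --- there is no $\varprojlim{}^1$ issue and no need to control countable products. (The paper accordingly dispatches this in one line, and even remarks that the whole back half of the argument is just ``operations generating a localising subcategory preserve the UCT class''.) Once you replace $\prod$ by $\bigoplus$ your write-up matches the paper's proof.
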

	\begin{proof}
Note first that the condition  (ii) implies (i) as  if $\G$ has no torsion, then its action on $D$ is Morita equivalent to the trivial action.
Thus we only need to prove (i).
		
	First, by induction, we show that $D^\op \rtimes \G \ltimes N_n$ satisfies the UCT for any $n \in \mathbb{N}\cup \{0\}$. This holds for $n=0$ by assumption. Assume $D^\op \rtimes \G \ltimes N_n$ satisfies the UCT. Since $D^\op \rtimes \G \ltimes D$ is a direct sum of matrix algebras, $D^\op \rtimes \G \ltimes P_{n+1} \simeq (D^\op \rtimes \G \ltimes D) \otimes (D^\op \rtimes \G \ltimes N_n)$ satisfies the UCT. Hence the mapping cone $D^\op \rtimes \G \ltimes N_{n+1}$ also satisfies the UCT.

	In particular, $P_n$ satisfies the UCT.
	Again by induction we see that $\tilde A_n$ satisfies the UCT for any $n\in \mathbb{N}$ by $\tilde A_1 = P_1$ and the triangle (\ref{eqn:tri}). Passing to the homotopy limit, we get that $P(A)$ satisfies the UCT.
	
The last two paragraphs could be replaced by observing that the operations generating the localising subcategory preserve the propert of "being in the UCT class".
	\end{proof}
	\begin{df} Fix a discrete quantum group $\QG$ with the dual compact quantum group $\G$, and consider $\langle {\rm Cof} \rangle$ and $ \mathcal{N}$, the subcategories of $KK^\G$  introduced above.
	We say that $\QG$ satisfies the \emph{$\langle {\rm Cof} \rangle$-Baum--Connes property} if $N$ is $KK$-contractible for any $N \in \mathcal{N}$, or equivalently, $P(A) \to A$ is a $KK$-equivalence.

	We say that $\QG$ satisfies the \emph{strong $\langle {\rm Cof} \rangle$ Baum--Connes property} if $N$ is $KK^{\G}$-contractible for any $N \in \cN$, or equivalently,  $KK^{\G} = \langle {\rm Cof} \rangle$.
	\end{df}
	\begin{rem}
	When $\QG$ is a classical discrete group, then the $\langle {\rm Cof} \rangle$-Baum--Connes property is equivalent to the fact that the strong Baum--Connes conjecture as introduced in \cite[Definition 9.1]{MeyerNest} holds, by \cite[Theorem 9.3]{MeyerNest}. We do not know whether $KK^{\G} = \langle {\rm Cof} \rangle$ even when $\QG$ is a  finite group.

In spite of that, many discrete quantum groups actually satisfy the strong $\langle {\rm Cof} \rangle$-Baum--Connes property. In particular, it holds for duals of compact connected groups \cite{MeyerNest2}, for  free orthogonal quantum groups \cite{VoigtBC}, free unitary quantum groups \cite{VergniouxVoigt} and free permutation groups \cite{VoigtQAut}. It passes through the monoidal equivalence and is closed under taking subgroups and free wreath product \cite{FreslonMartos}.
It also seems plausible that one could exploit results of  \cite{VergniouxVoigt} to deduce that it is also stable under free products.
	%In fact, in these cases, we even have $KK^{\G} = \langle {\rm Cof} \rangle$, or in other words, $N$ is $KK^{\G}$-contractible for any $N \in \mathcal{N}$.
	\end{rem}

	\begin{cor} \label{Cor:UCT} Let $A$ be a separable $\dG$-C*-algebra. We have the following.
	\begin{enumerate}
	\item For any torsion-free countable discrete quantum group $\dG$ with the $\langle {\rm Cof} \rangle$-Baum--Connes property, the C*-algebra $\dG \ltimes A$ satisfies the UCT if $A$ does.
	\item For any countable discrete quantum group $\dG$ with the $\langle {\rm Cof} \rangle$-Baum--Connes property, the C*-algebra $\dG \ltimes A$ satisfies the UCT if $A$ is of type I. In particular the reduced group C*-algebra $C(\G)$ satisfies the UCT.
	\end{enumerate}
	\end{cor}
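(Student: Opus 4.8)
The plan is to derive the corollary by feeding the right C*-algebra into Theorem~\ref{thm:UCT} and then invoking the definition of the $\langle {\rm Cof} \rangle$-Baum--Connes property. The key move is to apply these not to $A$ itself, but to the reduced crossed product $B:=\dG\ltimes A$, regarded as a $\G$-C*-algebra by means of its canonical dual $\G$-action: the underlying C*-algebra of $B$ is exactly the one whose UCT we must establish, $B$ is separable (as $A$ is and $\dG$ is countable), and $\G$ has countable dual, so Theorem~\ref{thm:UCT} is applicable to $B$. The one ingredient taken from outside the present paper is the quantum Takesaki--Takai biduality (see \cite{Vaes}),
\[
\G\ltimes B\;=\;\G\ltimes(\dG\ltimes A)\;\cong\;A\otimes\cK(\ell^2(\dG)),
\]
which exhibits the compact crossed product of $B$ as a stabilisation of $A$.

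For part~(1): if $\dG$ is torsion-free then $\G$ has no torsion, so it suffices to verify hypothesis~(ii) of Theorem~\ref{thm:UCT} for $B$, namely that $\G\ltimes B$ satisfies the UCT. Since $\G\ltimes B\cong A\otimes\cK$ by the above isomorphism and the UCT class is closed under stabilisation, $\G\ltimes B$ satisfies the UCT as soon as $A$ does. Theorem~\ref{thm:UCT}(ii) then shows that $P(B)$ satisfies the UCT.

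For part~(2), assume $A$ is of type~I; here I would verify hypothesis~(i), i.e.\ that $D^\op\rtimes\G\ltimes B$ satisfies the UCT for every finite-dimensional $\G$-C*-algebra $D$. This is the argument from the proof of Proposition~\ref{crossed fd}(2), carried out with $B$ in the role of the type~I coefficient algebra: a $\G$-invariant faithful state $\varphi$ on the finite-dimensional algebra $D^\op$ satisfies $\varphi(d^*d)\,1\ge\lambda\,d^*d$ for some $\lambda>0$, so the conditional expectation $E\colon D^\op\rtimes\G\ltimes B\to\G\ltimes B$ of Lemma~\ref{lem ce} satisfies $E(x^*x)\ge\lambda x^*x$, whence $E^{**}$ has finite index and $D^\op\rtimes\G\ltimes B$ is of type~I provided $\G\ltimes B$ is. By the biduality $\G\ltimes B\cong A\otimes\cK$ this last condition holds whenever $A$ is of type~I, and then $D^\op\rtimes\G\ltimes B$, being separable and of type~I, satisfies the UCT. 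Thus Theorem~\ref{thm:UCT}(i) again gives that $P(B)$ satisfies the UCT.

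In either case $P(B)$ satisfies the UCT, and the $\langle {\rm Cof} \rangle$-Baum--Connes property of $\dG$ asserts precisely that $P(B)\to B$ is a $KK$-equivalence; as the UCT class is closed under $KK$-equivalence, $B=\dG\ltimes A$ satisfies the UCT. Specialising to $A=\mathbb{C}$ with the trivial action (which is of type~I) in part~(2) yields $\dG\ltimes\mathbb{C}=\cst_r(\dG)=C(\G)$, which settles the last assertion. The substantial work is already behind us -- the adjunction of Theorem~\ref{thm:adjoint} and the phantom-tower analysis underlying Theorem~\ref{thm:UCT}; the only step above that is more than routine bookkeeping is in part~(2), where the type~I property of the ``base'' crossed product $\G\ltimes B$ is not visible from its direct description, so one genuinely needs the biduality identification $\G\ltimes(\dG\ltimes A)\cong A\otimes\cK$ before the finite-index argument of Proposition~\ref{crossed fd}(2) can be applied.
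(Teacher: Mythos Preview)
Your proof is correct and follows essentially the same route as the paper's: set $B=\dG\ltimes A$ with its dual $\G$-action, use Baaj--Skandalis/Takesaki--Takai duality to identify $\G\ltimes B\cong A\otimes\cK$, feed this into Theorem~\ref{thm:UCT} (via hypothesis~(ii) in the torsion-free case and via Proposition~\ref{crossed fd}(2) plus hypothesis~(i) in the type~I case), and conclude using the $\langle{\rm Cof}\rangle$-Baum--Connes property. The only cosmetic difference is that in part~(2) you spell out the finite-index conditional-expectation argument, whereas the paper simply cites Proposition~\ref{crossed fd}(2).
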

	\begin{proof} Recall that we denote the dual of $\QG$ by $\G$.

	(1) Consider the $\G$-C*-algebra $\QG \ltimes A$. Then by the Baaj--Skandalis duality (see \cite{Vaes} and recall that discrete/compact quantum groups are automatically regular), $\G \ltimes \QG \ltimes A \simeq K(L^2(\G)) \otimes A$, so that $\G \ltimes \QG \ltimes A$ satisfies the UCT. Now apply Theorem \ref{thm:UCT} to show $P(\QG \ltimes A)$ satisfies the UCT. By $\langle {\rm Cof} \rangle$-Baum--Connes property, the C*-algebra $\QG \ltimes A$ also satisfies the UCT.

	(2) We denote the $\G$-C*-algebra $\QG \ltimes A$ by $B$. First we will show that $D^\op \rtimes \G \ltimes B$ is of type I.
	Again by the Baaj--Skandalis duality, the C*-algebra $\G \ltimes B \simeq K(L^2(\G)) \otimes A$ is of type I.
	Hence by Proposition \ref{crossed fd}, $D^\op \rtimes \G \ltimes B$ is of type I. In particular $D^\op \rtimes \G \ltimes B$ satisfies the UCT. The rest of the proof is the same as (1).
	\end{proof}

\section{Quantum Rosenberg Conjecture}

One of very well-known conjectures regarding group \cst-algebras, the Rosenberg Conjecture, stating that reduced \cst-algebras of a countable amenable discrete groups are quasidiagonal, was established in \cite{TWWUCT} (with the converse implication proved much earlier by Rosenberg in \cite{Ros}). Here we show how as a corollary of that result and the progress on the UCT conjecture for quantum group algebras made in the last section  one can obtain a similar statement for a large class of (unimodular) discrete quantum groups. A key  observation is that the proof of the original result of Rosenberg %(\cite{Ros})
found  by Davidson (\cite{Dav} -- we refer to this book also for the definition of quasidiagonality) passes to the quantum case in a straightforward manner. 

\begin{thm}
	Assume that $\GGamma$ is a countable discrete quantum group. Then quasidiagonality of $\cst_r(\GGamma)$ implies amenability of $\GGamma$ and if $\GGamma$ is unimodular, amenable and satisfies the $\langle \textup{Cof}\rangle$-Baum-Connes property then $\cst_r(\GGamma)$ is quasidiagonal. Finally there exist amenable nonunimodular countable discrete quantum groups $\GGamma$ (e.g. $\widehat{SU_q(2)}$, $q \in (0,1)$) such that  $\cst_r(\GGamma)$ is not quasidiagonal.
\end{thm}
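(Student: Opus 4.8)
The statement has three separate parts, and I would organize the argument accordingly. First, the implication ``quasidiagonality of $\cst_r(\GGamma)$ $\Rightarrow$ amenability of $\GGamma$''. The strategy is to adapt Rosenberg's original argument (in the form presented by Davidson in \cite{Dav}) to the quantum setting. The classical proof shows that if $\cst_r(\Gamma)$ is quasidiagonal, then the left regular representation is amenable in the sense that it weakly contains the trivial representation, which forces amenability. In the quantum case, recall that $c_0(\GGamma)$ acts on $\ltwo$ via the left regular representation $\dlambda$, and $\cst_r(\GGamma)$ acts on the same space. Quasidiagonality of $\cst_r(\GGamma)$ gives a net of finite-rank projections $(p_n)$ on $\ltwo$ converging strongly to $1$ with $\|[p_n, a]\| \to 0$ for all $a \in \cst_r(\GGamma)$. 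I would use these projections to build an invariant mean: define states $m_n(x) = \Tr(p_n \dlambda(x) p_n)/\Tr(p_n)$ on $\Linf(\GGamma)$, take a weak$^*$ limit $m$, and then verify $m \circ L_\phi = \phi(1) m$ for $\phi = \widehat{e_{i,j}^\alpha}$ using the explicit formula \eqref{Lphi}: the key computation is that $L_\phi(x) = \sum_p u^\alpha_{i,p} x (u^\alpha_{j,p})^*$ can be re-expressed on $\ltwo$ using the multiplicative unitary, and the near-commutation of $p_n$ with the coefficients $u^\alpha_{i,j}$ (which lie in $\cst_r(\GGamma)$) together with the trace property of $\Tr$ forces $m \circ L_\phi = \phi(1) m$ in the limit. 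By the criterion of \cite{Vaesetal} recalled in the preliminaries, a left invariant mean suffices for amenability.

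Second, the converse: if $\GGamma$ is unimodular, amenable, and satisfies the $\langle \textup{Cof}\rangle$-Baum--Connes property, then $\cst_r(\GGamma)$ is quasidiagonal. Here I would combine Corollary \ref{Cor:UCT}(2) with the Tikuisis--White--Winter theorem from \cite{TWWUCT}. By Corollary \ref{Cor:UCT}(2), the $\langle \textup{Cof}\rangle$-Baum--Connes property implies $C(\G) = \cst_r(\GGamma)$ satisfies the UCT. Unimodularity of $\GGamma$ means the Haar state $h$ on $\cst_r(\GGamma)$ is tracial and faithful, so $\cst_r(\GGamma)$ is a separable, unital, simple-or-not but at any rate nuclear (by amenability, $\cst_r(\GGamma) = \cst_u(\GGamma)$ is nuclear) C*-algebra with a faithful trace and satisfying the UCT. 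The Tikuisis--White--Winter theorem states that every separable, nuclear, stably finite C*-algebra satisfying the UCT with a faithful amenable trace is quasidiagonal; amenability of the trace is automatic here because the algebra is nuclear. Thus $\cst_r(\GGamma)$ is quasidiagonal. The main point to get right is that amenability of $\GGamma$ indeed gives nuclearity of $\cst_r(\GGamma)$ and the equality of reduced and universal completions, and that unimodularity gives the faithful trace needed for the stable-finiteness hypothesis.

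Third, the sharpness statement: $\GGamma = \widehat{SU_q(2)}$ for $q \in (0,1)$ is amenable (its dual $SU_q(2)$ is coamenable, so the discrete dual is amenable) and countable, but $\cst_r(\widehat{SU_q(2)}) = C(SU_q(2))$ is not quasidiagonal. Non-quasidiagonality should follow because $C(SU_q(2))$ contains (or surjects onto, or has as a quotient/subquotient) a non-quasidiagonal C*-algebra --- concretely, $C(SU_q(2))$ has an extension/structure involving the Toeplitz algebra $\mathcal{T}$ (the $C^*$-algebra generated by a single non-unitary isometry), which is not quasidiagonal since it contains a proper isometry, and quasidiagonality passes to C*-subalgebras. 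I would cite the known structure of $C(SU_q(2))$ (e.g. from Woronowicz's original work or Vaksman--Soibelman) exhibiting this non-quasidiagonal subalgebra, and note that $SU_q(2)$ is not unimodular (its Haar state is not tracial, the modular element being $\rho \neq 1$), which is precisely why the converse direction of the theorem fails for it.

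\medskip

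\textbf{Expected main obstacle.} The first part --- adapting Rosenberg's argument --- is where the real work lies: one must carefully track how the finite-rank quasidiagonalizing projections interact with the GNS space $\ltwo$ of the Haar state and verify that the resulting limit functional genuinely satisfies the left-invariance identity $m \circ L_\phi = \phi(1)m$ rather than some weaker averaged statement. The classical proof uses cyclicity of the vacuum vector and the trace property in an essential way, and in the quantum (even unimodular) case the analogous computation with the multiplicative unitary $W$ and the coefficients $u^\alpha_{i,j}$ needs to be done with care, though the paper asserts this ``passes to the quantum case in a straightforward manner.'' The other two parts are essentially bookkeeping: invoking Corollary \ref{Cor:UCT}, the Tikuisis--White--Winter theorem, and the known representation theory of $SU_q(2)$.
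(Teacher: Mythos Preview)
Your plan matches the paper's proof almost exactly in all three parts: the Rosenberg--Davidson mean construction for the forward implication, the combination of Corollary~\ref{Cor:UCT}(2) with nuclearity (from \cite{BMT}) and the Tikuisis--White--Winter theorem for the converse, and the proper isometry in $C(SU_q(2))$ (from \cite{Wor2}) for the counterexample.

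There is, however, one step you glide over in the first part. You write that quasidiagonality of $\cst_r(\GGamma)$ ``gives a net of finite-rank projections $(p_n)$ on $\ltwo$ converging strongly to $1$ with $\|[p_n,a]\|\to 0$'', but abstract quasidiagonality of a C*-algebra does not automatically yield quasidiagonalizing projections in a \emph{given} faithful representation. What you need is \emph{concrete} quasidiagonality of $\cst_r(\GGamma)$ as a set of operators on $\ltwo$, and the passage from abstract to concrete (via Voiculescu's theorem) requires the left regular representation to be essential, i.e.\ to contain no nonzero compact operators. The paper handles this by splitting into cases: if $\GGamma$ is finite it is trivially amenable, and if $\GGamma$ is infinite then by \cite{Kal} the regular representation is essential, so Voiculescu applies and one obtains the projections $P_n$ on $\ltwo$ asymptotically commuting with the generators $u^\alpha_{i,j}$. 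Once that is in place, your computation with the normalized traces, the formula \eqref{Lphi}, the trace property, and the unitarity of $(u^\alpha_{i,j})$ is exactly what the paper does.
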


\begin{proof}
	Assume first that $\cst_r(\GGamma)$ is a quasidiagonal \cst-algebra.
	
	If $\GGamma$ is finite, then it is obviously amenable (in that case it is compact and the Haar state yields an invariant mean). If $\GGamma$ is infinite, then the left regular representation of $\cst_r(\GGamma)$ is essential, i.e.\ contains no compact operators (\cite{Kal}). This means (via one of the versions of the Voiculescu Theorem) that $\cst_r(\GGamma)$ is quasidiagonal as the set of operators in $B(\ltwo)$. Let then $(P_n)_{n=1}^{\infty}$ be a sequence of finite rank projections in $B(\ltwo)$ increasing to $I$ and such that for each $\alpha \in  \Irr_{\hQG}$, $i,j=1,\ldots,n_{\alpha}$, we have
	\[ \|P_n u^{\alpha}_{i,j} -  u^{\alpha}_{i,j} P_n\|\stackrel{n\to \infty}{\longrightarrow} 0.\]
	Let then $\tau_n$ be the normalised trace on the matrix algebra $B(P_n \ell^2(\QG))$, fix an ultrafilter $\mathcal{U}$ on $\bn$ and define the state $\omega$ on $B(\ell^2(\QG))$  via the prescription
	\[ \omega(a) = \lim_{\mathcal{U}} \tau_n (P_n a P_n), \;\;\; a \in  B(\ell^2(\QG)),\]
	and let $m=\omega|_{l^{\infty} (\QG)}$. Fix $\alpha \in \Irr_{\hQG}$, $i,j\in \{1,\ldots,n_\alpha\}$ and a non-zero $x \in l^{\infty}(\QG)$. Put $\phi = \widehat{e_{i,j}^{\alpha}} \in  {l^{\infty} (\QG)}^*$. Note that $\phi(1) = \delta_{i,j}$.
	
	For each $\epsilon>0$ we can find $N\in \bn$ such that for all $n\geq N$ and $ p=1,\ldots,n_{\alpha}$ we have
	\[ \|P_n u^{\alpha}_{i,p} - u^{\alpha}_{i,p} P_n\| \leq \epsilon (3 n_{\alpha} \|x\|)^{-1}\]
	and
	\[ \|P_n u^{\alpha}_{j,p} - u^{\alpha}_{j,p} P_n\| \leq \epsilon (3 n_{\alpha} \|x\|)^{-1}\]
	(note that the latter estimate is valid if one replaces $u^{\alpha}_{j,p}$ by $(u^{\alpha}_{j,p})^*$). 
	Thus for such $n\geq N$  (see \eqref{Lphi})
	\begin{align*} \left|\tau_n (P_n L_{\phi} (x) P_n) - \right.& \left. \delta_{i,j} \tau_n (  P_n x P_n) \right| = \left| \tau_n \left(P_n \left(\sum_{p=1}^{n_{\alpha}} u^{\alpha}_{i,p}  x (u^{\alpha}_{j,p})^*\right) P_n \right) - \delta_{i,j} \tau_n (  P_n x P_n) \right|
	\\&\leq \left| \sum_{p=1}^{n_{\alpha}} \tau_n (  P_n  u^{\alpha}_{i,p} P_n x P_n (u^{\alpha}_{j,p})^* P_n )- \delta_{i,j} \tau_n (  P_n x P_n) \right|  + \frac{2 \epsilon}{3}
	\\&= \left|
	\sum_{p=1}^{n_{\alpha}} \tau_n (  P_n x P_n (u^{\alpha}_{j,p})^* P_n   u^{\alpha}_{i,p}P_n)- \delta_{i,j} \tau_n (  P_n x P_n) \right| + \frac{2 \epsilon}{3}
	\\& \leq \left|\sum_{p=1}^{n_{\alpha}} \tau_n (  P_n x P_n (u^{\alpha}_{j,p})^* u^{\alpha}_{i,p}P_n) - \delta_{i,j} \tau_n (  P_n x P_n) \right|+ \epsilon =  \epsilon,
	\end{align*}
	where in the third equality we used the fact that $\tau_n$ is a trace and in the last one the unitarity of the matrix $(u^{\alpha}_{i,j})_{i,j=1}^{n_{\alpha}}$.
	This implies that in the limit we obtain
	\[ m(L_{\phi}(x)) = \lim_{\mathcal{U}} \tau_n (P_n L_{\phi}(x) P_n) = \delta_{i,j} m(x) = \phi(1) m(x)\]
	and the proof of the forward implication is finished.
	
	Assume then that $\GGamma$ is amenable and unimodular. By Theorem 1.1 of \cite{BMT}  $\cst_r(\GGamma)$ is nuclear; by the  unimodularity of $\GGamma$, $\cst_r(\GGamma)$ admits a faithful trace. Then the main result of \cite{TWWUCT} shows that $\cst_r(\GGamma)$ is quasidiagonal if it satisfies the UCT. This however follows from the assumption that $\GGamma$ satisfies the $\langle \textup{Cof}\rangle$-Baum-Connes property by Corollary \ref{Cor:UCT} (2).
	
	It remains to note that Woronowicz's compact quantum group $SU_q(2)$ (with $q \in (0,1)$) is coamenable, as first noted in \cite{Banica}. Thus  $\widehat{SU_q(2)}$ is amenable. On the other hand the \cst-algebra $\cst_r(\widehat{SU_q(2)})$, which is in fact independent of $q$, as observed in Theorem A.2 in \cite{Wor2}, contains a proper isometry, so in particular cannot be quasidiagonal.
\end{proof}

Note that if we knew that all group \cst-algebras of discrete amenable unimodular quantum groups satisfy UCT we could drop the $\langle \textup{Cof}\rangle$-Baum-Connes property assumption in the second part of the theorem above.

	%\bibliographystyle{plain}
	%\bibliography{UCTandBC}

\end{document}